\newtheorem{theorem}{Theorem}[section]
\newtheorem{corollary}[theorem]{Corollary}
\newtheorem{proposition}[theorem]{Proposition}
\newtheorem{lemma}[theorem]{Lemma}
\theoremstyle{definition}
\newtheorem{definition}[theorem]{Definition}
\newtheorem{example}[theorem]{Example}
\newtheorem{examples}[theorem]{Examples}
\theoremstyle{remark}
\newtheorem{remark}[theorem]{Remark}
\numberwithin{equation}{section}
\newcommand{\R}{\mathbb{R}}
\newcommand{\Z}{\mathbb{Z}}
\newcommand{\N}{\mathbb{N}}
\newcommand{\CC}{\mathbb{C}}
\newcommand{\HH}{\mathcal {H}}
\def\ra{\rangle}
\def\la{\langle}
\def\HH{\mathcal H}
\def\P{\mathscr P}
\def\I{I}
\def \F {\mathcal{F}}
\renewcommand{\I}{\mathscr {I}}
\title{Phaseless reconstruction from space-time samples}
\author{A.~Aldroubi, I.~Krishtal, S.~Tang}
\keywords{Sampling Theory, Frames, Sub-Sampling,
Reconstruction, M\"untz-Sz\'asz Theorem}
\subjclass [2010] {94O20, 42C15, 46N99}
\begin{document}
\maketitle

\date{\today}

\begin{abstract}
 Phaseless reconstruction from space-time samples is a nonlinear problem of recovering a function $x$ in a Hilbert space $\HH$ from the modulus of linear measurements $\{\lvert \la x, \phi_i\ra  \rvert$, $ \ldots$, $\lvert \la A^{L_i}x, \phi_i \ra \rvert : i \in\mathscr I\}$, where $\{\phi_i; i \in\mathscr I\}\subset \HH$ is a set of functionals on $\HH$, and $A$ is a bounded operator on $\HH$ that acts as an evolution operator.  In this paper, we provide various sufficient or necessary conditions for solving this problem, which has connections to $X$-ray crystallography, the scattering transform, and deep learning. 
 %{\color{blue} need to say something about what we do in the paper}

\end{abstract} 

\section{Introduction}

% exploit the spatiotemporal correlation 
\subsection{The phaseless reconstruction problem}
 To perform phaseless reconstruction, one needs to find an unknown signal $x \in \HH$ from the modulus of the linear measurements 
\begin {equation} \label {pahselessRec}
\{\vert \la x,f\ra \vert, \; f \in \F \subset \HH\}, 
\end {equation} where $\HH$ is a real or complex separable Hilbert space, and $\F$ is a well-chosen or known countable subset in $\HH$. Since for any $c$ such that $\vert c\vert=1$ the signals $x$ and $cx$ have the same measurements $\{\vert \la x,f\ra \vert, \; f \in \F \subset \HH\}$,  one can only hope to reconstruct $x$ up to some unimodular constant $c\in \mathbb{K}\in \{\mathbb{R, C}\}$, $\lvert c\rvert=1$, which is typically called a \emph{global phase}. Therefore, to get a well-posed problem, one considers the equivalence relation on $\HH$ defined by $x \sim y $ if and only if  $y=cx$ for some $c\in \mathbb{K}$, $\lvert c\rvert=1$. In particular, when $\mathbb{K}=\mathbb{R}$ and $\HH=\mathbb{R}^n$, $y\sim x$ if and only if $y=x$ or $y=-x$. 
Clearly,
the necessary condition for  phaseless reconstruction is  then
the injectivity of
%to find conditions on $\F$ such that 
the mapping $T_{\mathcal{F}}: (\HH/\sim) \rightarrow \ell^2(\I)$, 
$$\widetilde x \rightarrow (\lvert \langle x, f\rangle \rvert)_{f\in \F}, $$ where $\mathscr I\subset \N$ is an indexing set with $\vert \mathscr I \rvert=\vert \F \rvert$, and $(\HH/\!\sim)$ is the quotient space. Hence, the first milestone in solving the phaseless reconstruction problem is finding conditions on $\F$ such that the map $T_{\mathcal{F}}$  is injective,
%When the map $T_{\mathcal{F}}$  is injective, 
in which case we say that the set $\mathcal{F}$ \emph{does phaseless reconstruction} on $\HH$. 
This aspect of the problem was studied, for example, in \cite {BCE06, BCMA14,  BP17, CCQH16, CDMV15,PJ14, MV15,  PYB13,  Tha11}. The other important aspect of phaseless reconstruction is finding numerical algorithms for the recovery of the signal from phaseless measurements. Various approaches to this part of the problem can be found in  
\cite{BAMD14, BBCE09, BE16, CESV15,CSV13, DH14, EM14, IVW16, IVW17}.

The phaseless reconstruction problem and the equivalent  {\it  phase retrieval problem} appear in many different applications, for example, in imaging science \cite{BCM14, FL12, FL13}. 
The most well-known application %of phaseless reconstruction 
 is in X-ray crystallography \cite {RD86, F78, F82}, where the signal $x\in L^2(\Lambda)$ is the density  of electrons on a crystal lattice $\Lambda \subset \R^3$ and the measurements $\{\vert \la x,f\ra \vert, \; f \in \F \subset \HH\}$ are proportional to the modulus of the Fourier coefficients $\{\vert \hat x(\omega) \vert: \omega \in \Omega \}$, where $\Omega$ is the reciprocal lattice of $\Lambda$. More recently, the phaseless reconstruction problem is finding applications in the artificial intelligence area of  deep learning and convolution neural networks (see, e.g., \cite {MW15, WB15} and the references therein). 
 %in which a signal or image $x$ is described in terms of the, so called, features   $\{\vert \la x\ast \psi \ra \vert, \; \psi \in \Psi \subset \HH\}$. 

%Research concerning   the phaseless reconstruction consists in finding conditions under which $x$ can be recovered uniquely, up to unimodular constant $c$, from the measurements $\{\vert \la x,f\ra \vert, \; f \in \F \subset \HH\}$ \cite {BCMA14,Tha11,BCE06}, and finding numerical algorithms for the reconstruction \cite{CESV15,CSV13, BBCE09}. 

\subsection{The phaseless reconstruction problem in Dynamical sampling}
The object of study
in {\it Dynamical sampling} is an unknown vector $x \in \HH$ evolving  by iteration under the action of a bounded operator $A$ on $\HH$. In other words, at time $n$ the signal $x$ evolves to become  $x_n=A^n x$.  For 
$\Phi=\{\phi_i: i \in \mathscr I\}\subset \HH$,  
  the first problem of dynamical sampling is  to find conditions on $A$, $\Phi,$ and $L_i$ that ensure that any $x\in\HH$ can be recovered  from the measurements   
\begin{equation}\label{DynsamplePhi}
Y=\{\la x, \phi_i\ra  , \ldots, \la A^{L_i}x, \phi_i \ra : i \in\mathscr I\}.
\end{equation}
This problem was studied in \cite{AT14, ACMT14, ACCMP16, ADK13,  D14,  Lv09, Phi17, RCLV11, T16,T17}. 

The phaseless reconstruction problem in dynamical sampling combines the two problems we have discussed. More precisely, one seeks to find conditions on $A$, $\Phi,$ and $L_i$ such that any $x\in \HH$ can be recovered up to a global phase from the measurements  
 \begin{equation}\label{samplePhi}
Y=\{\lvert \la x, \phi_i\ra  \rvert, \ldots, \lvert \la A^{L_i}x, \phi_i \ra \rvert : i \in\mathscr I\}.
\end{equation}

If $\HH=\R^d$, $\mathscr I \subset \{0,1,2,\ldots, d-1\}$, and $\phi_i=e_i$ are the standard basic vectors in $\HH$, then the problem reduces to finding $x$ up to a sign from the phaseless space-time samples 
\begin{equation}\label{sample}
Y=\{\lvert x(i) \rvert, \ldots, \lvert A^{L_i}x(i) \rvert : i\in \mathscr I\}.
\end{equation}

 It is not difficult to see that  
 the signal recovery from \eqref {samplePhi} can be reformulated using the set
 \begin{equation}\label{vectors}
\mathcal F=\{\phi_i, A^*\phi_i,\ldots,(A^*)^{L_i}\phi_i: i\in\mathscr I\}
\end{equation}
 in $\HH$ in the following way.

%\marginnote{''admits" to ``allows"}[2cm]
\begin{lemma}\label {PhRec} Any $x \in \HH$ can be  determined  uniquely up to a global phase
%unimodular constant $c$ 
from the  measurements 
 \eqref{samplePhi} if and only if  the set of vectors 
\eqref{vectors} does phaseless reconstruction in $\HH$. 
\end{lemma}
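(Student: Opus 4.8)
The plan is to recognize that the two collections of nonnegative numbers appearing in the statement are literally the same list, so that the asserted equivalence reduces to merely unwinding the definition of phaseless reconstruction given earlier in the introduction. The only genuine computation needed is the defining identity of the adjoint operator $A^*$.

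First I would fix $i\in\mathscr I$ and an integer $k$ with $0\le k\le L_i$, and apply the definition of the adjoint $k$ times to obtain
$$\la A^k x,\phi_i\ra=\la x,(A^*)^k\phi_i\ra$$
for every $x\in\HH$. Taking moduli gives $\lvert\la A^kx,\phi_i\ra\rvert=\lvert\la x,(A^*)^k\phi_i\ra\rvert$. Letting $i$ and $k$ range over all admissible values, the measurement set $Y$ in \eqref{samplePhi} coincides, term by term, with the family of phaseless measurements $\{\lvert\la x,f\ra\rvert:f\in\F\}$ associated to the set $\F$ in \eqref{vectors}; here one simply reindexes $\F$ by a subset of $\N$ of the appropriate cardinality so that the labelling matches the indexing convention used to define the map $T_{\F}$.

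Once this identification is in place, both directions follow at once. By definition, the assertion ``every $x\in\HH$ is determined uniquely up to a global phase by $Y$'' means that whenever two signals $x,y\in\HH$ produce the same data $Y$ they must satisfy $x\sim y$. Since $Y$ is exactly the list of moduli $\lvert\la x,f\ra\rvert$, $f\in\F$, this statement is precisely the injectivity of $T_{\F}\colon(\HH/\!\sim)\to\ell^2(\I)$, which is by definition the statement that $\F$ does phaseless reconstruction on $\HH$. Conversely, injectivity of $T_{\F}$ immediately yields recoverability of $x$ up to a global phase from $Y$, because the two data sets are identical.

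I do not expect a substantial obstacle here — this is the kind of lemma the authors rightly flag as ``not difficult to see'', and its entire content is the adjoint identity $\la A^k x,\phi_i\ra=\la x,(A^*)^k\phi_i\ra$. The only point that requires a word of care is the bookkeeping: one should check that the relabeling of the doubly-indexed family $\{(A^*)^k\phi_i\}$ into the singly-indexed set $\F$ is consistent with the domain $\ell^2(\I)$ of $T_{\F}$, and observe that any repetitions among the vectors $(A^*)^k\phi_i$ are harmless, as deleting duplicated functionals changes neither the measurement list nor the injectivity of $T_{\F}$.
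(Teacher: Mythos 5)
Your proof is correct and is exactly the argument the paper intends: the paper states Lemma \ref{PhRec} without proof (flagging it as ``not difficult to see''), and the implicit justification is precisely your adjoint identity $\la A^k x,\phi_i\ra=\la x,(A^*)^k\phi_i\ra$, which makes the two measurement families coincide so that the equivalence is just the definition of phaseless reconstruction. Your extra remarks on reindexing and harmless repetitions are sound bookkeeping that the paper silently elides.
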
 
Of course, stability of the reconstruction is  important in applications. For a finite dimensional space, if $\F$ in \eqref {vectors} does phaseless reconstruction, then $\F$ is always a frame, and stability of reconstruction is always feasible. It is well known, however, that when $\HH$ is infinite dimensional, stability is very delicate and cannot be achieved in general \cite{CCD16}.

\subsection {Contributions and Organization}

\subsubsection {Contributions}  In this paper, we consider the problem of phaseless reconstruction in dynamical sampling in real Hilbert spaces. When the Hilbert space $\HH$ is  finite dimensional, we find conditions on the evolution operator $A$, the sampling functionals $\phi_i$, and the number of time levels $L_i$ for each $i\in\mathscr I$ such that  phaseless reconstruction from \eqref {samplePhi} is feasible. The conditions on $A$ are given in terms of the spectrum $\sigma(A)$ of the operator $A$, whereas the conditions on the functionals $\phi_i$ and the time levels $L_i$ depend on the manner in which the supports of the functionals interact with $\sigma(A)$. %In particular, if $A$ is diagonalizable and the largest  multiplicity of  the spectral values is $K$, then it is necessary to have at least $2K$  \marginnote{ $L_i=2K-1$, add a remark on this later} temporal samples that correspond to some vector $\phi_i$.  
The  diagonalizable case is presented in Theorem \ref {GenRealMat} and Remark \ref {positiveEV}. The general case for finite dimensional $\HH$ is presented in Theorem \ref {GenRealMatJorD}. An application to the special case of a convolution operator is given in Corollary \ref{posconv}. 

For the infinite dimensional case, we only consider  operators $A$ that are diagonalizable and reductive. By diagonalizable,  we mean the operators that are unitarily equivalent to a diagonal matrix  on $\ell^2(\N)$. Reductive operators are those that have the property that if $V$ is an invariant subspace for $A$, then $V$ is also invariant for $A^*$; self-adjoint operators, for example, are reductive. For the case of diagonalizable reductive operators, the conditions for phaseless reconstruction are similar to the ones in the finite dimensional case and are presented in Theorems \ref {GenRealMatInf} and \ref {GenRealMatInfPos}. 

\subsubsection {Organization}  In Section \ref {NP}, we present  the concepts that are crucial for understanding  our main results, such as Theorem \ref {spanning}.
%, for example, is fundamental in the proofs of our main theorems.  
In Subsection \ref {IRM}, we introduce the classes of iteration regular  matrices and totally full spark matrices. We also state and prove Proposition \ref {iterregD} that relates these two notions. 
%The concepts and  Proposition   \ref {iterregD} are essential in the development of the main theorems. 
Another fundamental concept is that of local complementarity in Definition \ref {LC}. Subsection \ref {JMAP} is devoted to some notation related to Jordan forms and associated projections.

The main results of the paper are presented in Section \ref {Main}. Their proofs are relegated to Section \ref{mainproof}. They use  Theorems \ref {tadcul} and \ref {complete} in \cite {ACMT14} that give necessary and sufficient conditions on $A$, $\phi_i$, and $L_i$ for the exact reconstruction from measurements  \eqref {DynsamplePhi}. Outcomes of a numerical experiment on synthetic data are presented in Section \ref {NE}.

\section {Notation and Preliminaries} 
\label {NP}

\subsection{The complement property}
A result in  \cite{BCE06} gives a useful criterion for  a set $\mathcal{F}\subset \HH$ to do phaseless reconstruction whenever $\HH$ is a real Hilbert space. Specifically, the following characterization was obtained there (we include the proof since the result is a cornerstone of the theory we develop here).  

\begin{theorem}[\cite{BCE06}]\label{spanning} A subset $\mathcal{F}$ of a real Hilbert space $\HH$ does phaseless reconstruction on $\HH$ if and only if for any disjoint partition of $\mathcal{F}=\mathcal{F}_1 \cup \mathcal{F}_2$,  $\overline {span \mathcal{F}_1}=\HH$  or $\overline {span \mathcal{F}_2}=\HH$.
\end{theorem}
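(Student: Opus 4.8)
The plan is to prove both directions by contrapositive, working with the characterization that phaseless reconstruction fails exactly when two distinct equivalence classes $\widetilde x \neq \widetilde y$ produce identical measurements, i.e. $|\langle x, f\rangle| = |\langle y, f\rangle|$ for all $f \in \mathcal{F}$ with $y \neq \pm x$. The geometric bridge I would use is this: over a real Hilbert space, $|\langle x, f\rangle| = |\langle y, f\rangle|$ holds if and only if $\langle x, f\rangle = \langle y, f\rangle$ or $\langle x, f\rangle = -\langle y, f\rangle$, which says that $f$ is orthogonal either to $x - y$ or to $x + y$. This single observation is what converts the modulus condition into a clean partitioning statement.

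**For the ``only if'' direction (necessity of the spanning condition)**, I would argue the contrapositive: suppose there exists a disjoint partition $\mathcal{F} = \mathcal{F}_1 \cup \mathcal{F}_2$ with $\overline{\operatorname{span} \mathcal{F}_1} \neq \HH$ and $\overline{\operatorname{span} \mathcal{F}_2} \neq \HH$. Then I can pick nonzero $u \in (\operatorname{span}\mathcal{F}_1)^\perp$ and nonzero $v \in (\operatorname{span}\mathcal{F}_2)^\perp$. Setting $x = u + v$ and $y = u - v$, every $f \in \mathcal{F}_1$ is orthogonal to $u$, so $\langle x, f\rangle = \langle v, f\rangle = -\langle y - 2u, f\rangle$; more directly, $\langle x,f\rangle = \langle v,f\rangle = \langle y,f\rangle$ for $f\in\mathcal F_1$ (since $\langle u,f\rangle = 0$), and $\langle x,f\rangle = \langle u,f\rangle = -\langle y,f\rangle$ for $f \in \mathcal{F}_2$ (since $\langle v,f\rangle=0$). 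In both cases $|\langle x,f\rangle| = |\langle y,f\rangle|$, so $x$ and $y$ have identical phaseless measurements. It remains to check $x \neq \pm y$, which follows because $v \neq 0$ forces $x \neq y$ and $u \neq 0$ forces $x \neq -y$. Hence $T_{\mathcal F}$ is not injective and $\mathcal{F}$ fails phaseless reconstruction.

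**For the ``if'' direction (sufficiency)**, I again argue the contrapositive: assume phaseless reconstruction fails, so there are $x, y \in \HH$ with $x \neq \pm y$ (in particular $x-y \neq 0$ and $x+y \neq 0$) and $|\langle x, f\rangle| = |\langle y, f\rangle|$ for all $f \in \mathcal{F}$. Using the real-scalar dichotomy above, I partition $\mathcal{F}$ by defining
\[
\mathcal{F}_1 = \{ f \in \mathcal{F} : \langle x - y, f\rangle = 0\}, \qquad \mathcal{F}_2 = \mathcal{F} \setminus \mathcal{F}_1.
\]
For $f \in \mathcal{F}_2$ we have $\langle x-y, f\rangle \neq 0$, which combined with $|\langle x,f\rangle| = |\langle y,f\rangle|$ forces $\langle x+y, f\rangle = 0$. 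Thus every $f \in \mathcal{F}_1$ annihilates the nonzero vector $x - y$, giving $\overline{\operatorname{span}\mathcal{F}_1} \subseteq (x-y)^\perp \neq \HH$, and every $f \in \mathcal{F}_2$ annihilates the nonzero vector $x + y$, giving $\overline{\operatorname{span}\mathcal{F}_2} \subseteq (x+y)^\perp \neq \HH$. This exhibits a partition violating the spanning condition, completing the contrapositive.

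**The main subtlety**, and the step I would handle most carefully, is the equivalence $|\langle x,f\rangle| = |\langle y,f\rangle| \iff \langle x-y,f\rangle = 0 \text{ or } \langle x+y,f\rangle = 0$, which crucially relies on $\HH$ being \emph{real}: for real scalars $|a| = |b|$ is equivalent to $a = b$ or $a = -b$, letting me factor the condition linearly in $f$. Over a complex field this factorization fails (the phase is a full circle, not just $\pm 1$), which is precisely why the theorem is stated for real Hilbert spaces. Beyond that, the only care needed is tracking that the closures of the spans are the correct orthogonal complements and that $x \neq \pm y$ translates exactly into the nonvanishing of both $x-y$ and $x+y$.
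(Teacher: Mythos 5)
Your proof is correct and takes essentially the same route as the paper: for necessity you build the identical counterexample ($x = u+v$ versus $y = u-v$ with $u,v$ orthogonal to the two non-spanning halves, matching the paper's $x_1+x_2$ versus $x_1-x_2$), and for sufficiency you use the same dichotomy partition of $\mathcal F$ according to whether $\langle x-y,f\rangle = 0$ or $\langle x+y,f\rangle = 0$, merely phrased contrapositively where the paper argues directly (and with the harmless difference that you place the functionals with $\langle x,f\rangle = 0$ in $\mathcal F_1$ while the paper puts them in $\mathcal F_2$). The only blemish is a transposed sign in your necessity step: with $x = u+v$ and $y = u-v$ one has $\langle x,f\rangle = -\langle y,f\rangle$ for $f\in\mathcal F_1$ and $\langle x,f\rangle = \langle y,f\rangle$ for $f\in\mathcal F_2$, not the reverse as you wrote, but this is immaterial since only the equality of moduli $\lvert\langle x,f\rangle\rvert = \lvert\langle y,f\rangle\rvert$ is needed.
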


\begin{proof}
Assume that there is a disjoint partition $\mathcal{F}=\mathcal{F}_1 \cup \mathcal{F}_2$ such that neither $\overline {span \mathcal{F}_1}=\HH$  nor $\overline {span \mathcal{F}_2}=\HH$. Let $x_i \in  \HH\setminus\{0\}$  be such that $x_i \perp \mathcal F_i$, $i =1,2$. Then $x_1+x_2 \neq \pm(x_1-x_2)$, and  $\lvert \la x_1+x_2,f \ra \rvert =\lvert \la x_1-x_2,f \ra \rvert$ for all $f \in \F$. Thus $x_1+x_2$ cannot be distinguished from $x_1-x_2$ by phaseless measurements generated by $\mathcal F$.

Conversely, assume that, for any disjoint partition $\mathcal{F}_1, \mathcal{F}_2$ of $\mathcal{F}$,  $\overline {span \mathcal{F}_1}=\HH$  or $\overline {span \mathcal{F}_2}=\HH$, and let $x, y\in \HH$ be two vectors that produce the same phaseless measurements, i.e., $|\langle x, f\rangle| = |\langle y, f\rangle|$ for all $f \in \mathcal F$. Consider the set $\mathcal F_1=\{f \in \mathcal F: \langle x, f\rangle  \ne 0,  \, \langle x, f\rangle = \langle y, f\rangle\}$, and $\mathcal F_2=\mathcal F\setminus \mathcal F_1=\{f \in \mathcal F: \langle x, f\rangle = -\langle y, f\rangle\}$.  Then, if  $\overline {span \mathcal{F}_1}=\HH$, we have $x=y$. If   $\overline {span \mathcal{F}_2}=\HH$, then $x=-y$.
\end{proof}

%Immediately from the above theorem, we  get the following corollary.

%\begin{corollary}\label{GLinvariance}
%Assume that $A$ is an invertible operator on a real Hilbert space $\HH$. Then a set $\mathcal{F}$ of vectors in $\HH$ does phaseless reconstruction on $\HH$ if and only if so does the set $A\mathcal{F}$.
%\end{corollary}

When $\HH=\R^n$, we also have the following corollary.
\begin{corollary}[\cite{BCE06}] \label {loccompreal} 
Assume that a set $\mathcal{F}=\{f_1,f_2, \ldots, f_m\}$ does phaseless reconstruction on $\mathbb{R}^n$. Then $m \geq 2n-1$. 
\end{corollary}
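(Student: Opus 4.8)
The plan is to derive the bound directly from the complement property established in Theorem \ref{spanning}. Since $\HH=\mathbb{R}^n$ is finite dimensional, every subspace is automatically closed, so that characterization specializes to the following: $\mathcal{F}$ does phaseless reconstruction on $\mathbb{R}^n$ if and only if, for every disjoint partition $\mathcal{F}=\mathcal{F}_1\cup\mathcal{F}_2$, at least one of $\mathcal{F}_1,\mathcal{F}_2$ spans all of $\mathbb{R}^n$. I would argue by contraposition: assuming $m\le 2n-2$, I will exhibit a single partition of $\mathcal{F}$ in which neither block spans $\mathbb{R}^n$, thereby showing that $\mathcal{F}$ fails to do phaseless reconstruction.

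The key step is a counting observation. A collection of at most $n-1$ vectors in $\mathbb{R}^n$ spans a subspace of dimension at most $n-1$, hence cannot equal $\mathbb{R}^n$. So it suffices to split the $m\le 2n-2$ vectors of $\mathcal{F}$ into two blocks, each of cardinality at most $n-1$. Concretely, I would place any $n-1$ of the $f_j$ into $\mathcal{F}_1$ and the remaining $m-(n-1)$ into $\mathcal{F}_2$; since $m-(n-1)\le (2n-2)-(n-1)=n-1$, both blocks have at most $n-1$ elements. (When $m\le n-1$ the trivial partition with $\mathcal{F}_2=\emptyset$ already works, as neither block spans $\mathbb{R}^n$.) Each block therefore spans a proper subspace, which directly contradicts the complement property of Theorem \ref{spanning}.

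Combining the two observations, the hypothesis that $\mathcal{F}$ does phaseless reconstruction forces $m\ge 2n-1$. The argument is essentially a pigeonhole count, and the only thing to watch is the bookkeeping in the inequality $m-(n-1)\le n-1$; there is no genuine analytic obstacle, since finite dimensionality removes all the subtleties about closures of spans that complicate the infinite-dimensional setting. I would simply take care that the edge cases (namely $m<n$ and the possibly empty block) are handled cleanly, so that the exhibited partition genuinely witnesses the failure of the spanning condition and the contrapositive goes through.
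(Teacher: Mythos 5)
Your proof is correct and is exactly the intended derivation: the paper states the corollary as an immediate consequence of Theorem \ref{spanning} (the complement property), and your pigeonhole argument --- splitting $m\le 2n-2$ vectors into two blocks of at most $n-1$ vectors, neither of which can span $\mathbb{R}^n$ --- is the standard proof from \cite{BCE06}. The edge cases you flag (an empty block when $m\le n-1$) are handled correctly, since Theorem \ref{spanning} applies to any disjoint partition and an empty block trivially fails to span.
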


The theorem above suggests the following definition.

\begin{definition}
\label {CompProp}
Let $\mathcal{F}\subset \HH$ be a system of vectors in $\HH$. We say that $\mathcal{F}$ satisfies the \emph{complement property in $\HH$} if  any disjoint partition  $\mathcal{F}=\mathcal{F}_1 \cup \mathcal{F}_2$ satisfies $\overline {span \mathcal{F}_1}=\HH$  or $\overline {span \mathcal{F}_2}=\HH$. 
\end{definition}

\begin{remark}\label{expnec}
 According to Theorem \ref {spanning}, a set $\mathcal{F}$ does phaseless reconstruction on a real Hilbert space $\HH$ if and only if $\mathcal F$ has the complement property in $\HH$. The equivalence is no longer true, however, if we wish to recover any $x$ in  a complex Hilbert space $\HH$. In this case,  the complement property is necessary but not sufficient. For example, consider  vectors $\psi_1 = \left(
\begin{array}{c} 1 \\  0\end{array}\right)$ , $\psi_2 = \left(
\begin{array}{c} 0 \\  1 \end{array}\right)$,  and $\psi_3 = \left(
\begin{array}{c} 1 \\  1 \end{array}\right)$ in $\CC^2$. Then $\{\psi_1,\psi_2, \psi_3\}$ has complement property in $\CC^2$, but it does not allow phaseless reconstruction since $\left(
\begin{array}{c} 1 \\  i \end{array}\right)$ and $\left(
\begin{array}{c} 1 \\  -i \end{array}\right)$ have the same measurements. 
\end{remark}

The following definition will lead us to a necessary condition for phaseless reconstruction.

\begin{definition}\label {LC} 
Let $\mathscr P = \{P_j: j\in \Gamma\}$ be a family of projections in a Hilbert space $\HH$ and $\Psi = \{\psi_i: i\in\mathscr I\}$ be a set of vectors in $\HH$. We say that $\Psi$ is \emph{locally complementary} with respect to $\mathscr P$ if for every partition $\{\mathscr I_1, \mathscr I_2\}$ of $\mathscr I$ there exists $\ell\in\{1,2\}$ such that 
for each $j \in \Gamma$ the set
\begin{equation}\label{projectvectors1}
\{P_j\psi_i : i \in \mathscr I_\ell\} 
\end{equation}
spans the range $E_j$ of $P_j$.
\end{definition}

The following proposition follows immediately from the definition.

\begin {proposition}\label{locness}
Let $\HH$ be a complex Hilbert space,    $\Psi = \{\psi_i: i\in\mathscr I\}$ be a set of vectors in $\HH$, and 
$\mathscr P = \{P_j: j\in \Gamma\}$ be a family of  projections. If $\Psi$ has a complement property in $\HH$ then it is locally complementary with respect to $\mathscr P$.
\end {proposition}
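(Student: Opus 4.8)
The plan is to reduce the statement directly to the complement property by pushing a single completeness condition through every projection at once. First I would fix an arbitrary partition $\{\mathscr I_1, \mathscr I_2\}$ of $\mathscr I$ and let it induce the partition $\Psi = \Psi_1 \cup \Psi_2$, where $\Psi_\ell = \{\psi_i : i \in \mathscr I_\ell\}$ for $\ell \in \{1,2\}$. Since $\Psi$ has the complement property in $\HH$, there is some $\ell$ with $\overline{\mathrm{span}}\,\Psi_\ell = \HH$, and this is exactly the index $\ell$ required by the definition of local complementarity.

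With this $\ell$ now fixed, I would verify the spanning condition for each $j \in \Gamma$ separately. The inclusion $\overline{\mathrm{span}}\{P_j\psi_i : i \in \mathscr I_\ell\} \subseteq E_j$ is immediate, since every $P_j\psi_i$ lies in $E_j$ and $E_j$, being the range of a bounded projection, is closed. For the reverse inclusion I would invoke continuity of $P_j$: from $\overline{\mathrm{span}}\,\Psi_\ell = \HH$ together with the elementary fact that $P_j(\overline{S}) \subseteq \overline{P_j(S)}$ for any subset $S$, I obtain
$$E_j = P_j(\HH) = P_j\big(\overline{\mathrm{span}}\,\Psi_\ell\big) \subseteq \overline{P_j(\mathrm{span}\,\Psi_\ell)} = \overline{\mathrm{span}}\{P_j\psi_i : i \in \mathscr I_\ell\}.$$
Combining the two inclusions yields equality, so the set $\{P_j\psi_i : i \in \mathscr I_\ell\}$ spans $E_j$. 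The essential point is that the single $\ell$ produced in the first step serves every $j \in \Gamma$ simultaneously, with no need to re-choose it as $j$ varies; since the partition $\{\mathscr I_1, \mathscr I_2\}$ was arbitrary, $\Psi$ is locally complementary with respect to $\mathscr P$.

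I do not expect a genuine obstacle, which is why the result is billed as following immediately from the definition. The only point demanding care is the reading of \emph{spans}: in the infinite-dimensional setting it must be interpreted as closed linear span, and the argument then rests on two standard facts, namely the closedness of the range of a bounded projection and the continuity inclusion $P_j(\overline S) \subseteq \overline{P_j S}$. If the $E_j$ happen to be finite dimensional, for instance when the $P_j$ are finite rank, the closures are superfluous and the same computation collapses to ordinary linear spans.
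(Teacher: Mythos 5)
Your proof is correct and is precisely the immediate argument the paper has in mind: the paper offers no written proof, stating only that the proposition ``follows immediately from the definition,'' and your argument --- extracting a single $\ell$ from the complement property and pushing the dense span through each bounded projection $P_j$ via $P_j(\overline{S}) \subseteq \overline{P_j(S)}$ --- fills in exactly that omitted reasoning. Your observation that one $\ell$ serves all $j \in \Gamma$ simultaneously, matching the quantifier order in Definition~\ref{LC}, is the right point of care, and the closed-span reading is indeed the correct one in the infinite-dimensional setting.
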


\subsection{Iteration regular matrices} \label {IRM}

In this section, we introduce a new class of matrices that is especially amenable for phaseless reconstruction in dynamical sampling.  %But we first introduce 
We begin with the following definitions.

\begin {definition} \label {minpol} Consider a bounded operator $A\in B(\HH)$ and a vector $x\in\HH$.
\begin {enumerate}
\item If there exists a non-zero polynomial $q$ such that $q(A)=0$, then   the  monic polynomial $p$ of the smallest degree such that $p(A) = 0$ will be called the {\em minimal (annihilating) polynomial of $A$} and will be denoted by $p^A$. Whenever $p^A$ exists, $r^A$ will denote its degree; otherwise we let $r^A=\infty$. 

\item  If there exists a non-zero polynomial $q$ such that $q(A)x=0$, then  the  monic polynomial $p$ of the smallest degree such that $p(A)x = 0$ will be called the {\em (A,x)-annihilator} and will be denoted by $p^A_x$. Whenever $p^A_x$ exists, $r^A_x$ will denote its degree; otherwise we let $r^A_x=\infty$.  %{\color {red} what if $A$ or $x$ are $0$} {\color{blue} If $x = 0$, $r^A_x = 0$ by this definition; if $A = 0\neq x$ then $r^A_x = 1$.}

%\item A non-zero  polynomial $p$ is a \emph{$k$-partial annihilator} of $A$, $k\in\mathbb N$, if it has at least $k$ roots in common with the minimal polynomial of $A$ counting the multiplicity.
%{\color{red} Sui's comments:  I  am always confused with the meaning of counting the multiplicity. It is better to give several examples here. }

\end{enumerate}
\end {definition}

Observe that in the above definition we always have $r^A_x \le r^A$, $r^0 = 1$, and $r^A_0 = 0$.

\begin {definition} \label {PartAnn} Assume that a bounded operator $A\in B(\HH)$ has a minimal polynomial $p^A$.  A non-zero  polynomial $p$ is a \emph{$k$-partial annihilator} of $A$, $k\in\mathbb N$, if it has at least $k$ roots in common with $p^A$ counting the multiplicity or, in other words, if $p$ and $p^A$ have a common divisor of degree $k$.
\end {definition}

\begin {definition} \label {IterReg} Let $A\in\mathbb K^{n\times n}$ be a  matrix.
The matrix $A$ is called \emph{iteration regular} if for all  {$k\in \mathbb N$}, any $k$-partial annihilator of $A$  of degree at most {$r=\max \{1,2k-2\}$ }has at least $k+1$ non-zero coefficients.  
\end {definition}

{\begin {examples}  \label {expiterreg}${}$ We illustrate the above definition by the following examples.
\begin {enumerate}
\item Consider the identity  matrix $I = I_n$ in $\mathbb K^n$. Then $p^I(x)= x-1$ and all partial annihilators are $1$-partial annihilators of the form  $c(x-1)$ with $c\in\mathbb R\setminus\{ 0\}$.  These polynomials have $2$ non-zero coefficients, and this immediately implies that the matrix $I_n$ is iteration regular. 
\item Assume that $A$ is a singular $n\times n$ matrix on $\mathbb K$. Then its minimal polynomial is divisible by $x$. Hence, $q(x) =x$ is  a $1$-partial annihilator of degree $1$ that has only one non-zero coefficient. Therefore,  $A$ is  not iteration regular. Observe that for invertible matrices we only need to check $k$-partial annihilators for cases when $k > 1$.
\item Consider the $2\times 2$ diagonal matrix $D=diag (-1,2)$. Then $p^D(x)=(x+1)(x-2)$. Since $D$ is invertible, we do not need to check the $1$-partial annihilators. Therefore, it suffices to check only the $2$-partial annihilators of degree $2$. These  are given by $c(x+1)(x-2)$,  $c\in\mathbb R\setminus\{ 0\}$. Clearly, they have  $3$ non-zero coefficients and $D$ is iteration regular. 
\item Assume that $A$ is any $n\times n$ matrix on $\mathbb K$, such that its minimal polynomial $p^A$ has a pair of  purely imaginary conjugate roots. Then $A$ is not iteration regular because it has a $2$-partial annihilator with only two non-zero coefficients.
\end {enumerate}
\end {examples}
}
%{\color{red} Sui's comment: For any nonzero matrix $A$, does there always exist a $k$-partial annihilator with $k \geq 2$?If it is, we do not need to consider the case $k=1$.  From the cases $k\geq 2$, we can conclude that $A$ is invertible.See the revised proof of Proposition 2.13.}

%{\color {green}\begin {remark} \label {iterInv}
%The example (3) above shows that  if matrix $A$ is iteration regular, then $A$ it must be invertible. 
%\end {remark}
%}

%It is easy to provide examples of invertible matrices that are not iteration regular. In particular, any matrix that 
%%is not invertible {\color{red} $2k-2=0$ for $k=1$)!!!} or 
%has a pair of complex conjugate imaginary roots does not have this property. {\color{red}
%Carefully counting the degrees of freedom, one can show that a generic matrix is iteration regular} 

Clearly, the property of iteration regularity is a purely spectral property; it is invariant under similarity of matrices. Therefore, to determine if a given matrix is iteration regular, it suffices to consider  the Jordan canonical form
of the matrix. In general, however, it may still be very difficult.  We will provide some sufficient conditions that are of interest.

We begin with  a sufficient condition for  iteration regularity of a diagonal matrix. We will need the following definition. 

{ \begin {definition}
\label {spark}
Let  $B \in \mathbb{K}^{m\times n}$, be such that $m \leq n$.
\begin {enumerate}
\item  The \emph{spark} of $B$ is the size of the smallest  linearly dependent subset of its columns, i.e.,
$$Spark(B)=min\{\lvert\lvert f \rvert\rvert_0:  f \in \mathbb{K}^n, Bf=0, f \neq 0\}.$$
\item The matrix $B$ has \emph{full spark} if $spark(B) = m+1$.
\item  By the spark of a set $\{\phi_1, \phi_2, \ldots,
\phi_n\}\subset \mathbb{K}^m$ we mean the spark of the matrix formed by these vectors as columns.
\item A matrix $B \in \mathbb{K}^{m\times n}$, $m, n\in \N^+$, has \emph{totally full spark} if every one of its square submatrices is invertible.
\end {enumerate}
\end {definition}
}
\begin{proposition}
\label {iterregD}
Let $D\in\mathbb K^{n\times n}$ be a diagonal matrix with $d$ distinct eigenvalues
$\lambda_1$, \ldots, $\lambda_d$. Assume that the $d\times (2d-1)$ matrix
$\Lambda= (\lambda_j^\ell)$, 
 $j = 1,\ldots, d$, $\ell = 0,\ldots, 2d-2$, has totally full spark. Then $D$ is iteration regular.
 \end{proposition}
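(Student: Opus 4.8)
The plan is to reduce iteration regularity to a rank statement about submatrices of $\Lambda$ and then let the totally full spark hypothesis finish the job. Since $D$ is diagonal with $d$ distinct eigenvalues, its minimal polynomial is $p^D(x)=\prod_{j=1}^d (x-\lambda_j)$, which has $d$ simple roots. Consequently, for a nonzero polynomial $p$ to be a $k$-partial annihilator of $D$ it must share $k$ of these simple roots, i.e. it must vanish at $k$ of the eigenvalues, say $\lambda_{j_1},\dots,\lambda_{j_k}$ with $j_1<\dots<j_k$. In particular this forces $k\le d$, so for $k>d$ there is no $k$-partial annihilator and the condition is vacuous; hence I may restrict attention to $1\le k\le d$.

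Fix such a $k$ and a $k$-partial annihilator $p$ with $\deg p\le r=\max\{1,2k-2\}$, and argue by contradiction: suppose $p$ has at most $k$ nonzero coefficients. Writing those nonzero coefficients as $c_{\ell_1},\dots,c_{\ell_t}$ at exponents $\ell_1<\dots<\ell_t$ (so $t\le k$), the $k$ conditions $p(\lambda_{j_i})=0$, $i=1,\dots,k$, become the homogeneous system $M\mathbf c=0$, where $M=(\lambda_{j_i}^{\ell_s})_{1\le i\le k,\,1\le s\le t}$ and $\mathbf c=(c_{\ell_1},\dots,c_{\ell_t})^{\top}\neq 0$. The crucial observation is that $M$ is a submatrix of $\Lambda$: its rows are indexed by the eigenvalue indices $j_1,\dots,j_k\in\{1,\dots,d\}$ and its columns by the exponents $\ell_1,\dots,\ell_t$, which satisfy $\ell_s\le \deg p\le r\le 2d-2$ precisely because $k\le d$. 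Thus every $\ell_s$ is a legitimate column index of the $d\times(2d-1)$ matrix $\Lambda$, and $M$ genuinely sits inside $\Lambda$.

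Now I extract a square submatrix to reach the contradiction. Since $t\le k$, the $k\times t$ matrix $M$ contains a $t\times t$ submatrix (select any $t$ of its $k$ rows), and this is again a square submatrix of $\Lambda$; by the totally full spark hypothesis it is invertible. Hence $M$ has full column rank $t$, so $M\mathbf c=0$ forces $\mathbf c=0$, contradicting $p\neq 0$. Therefore $p$ must in fact have at least $k+1$ nonzero coefficients, which is exactly the defining requirement of iteration regularity; as this holds for every $1\le k\le d$, the matrix $D$ is iteration regular.

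The conceptual heart of the argument is the identification of the vanishing conditions of a sparse polynomial with a submatrix of $\Lambda$; once that dictionary is set up, totally full spark does all the remaining work in one line. The step I expect to require the most care is the bookkeeping guaranteeing that the exponents $\ell_s$ never exceed $2d-2$, so that $M$ is honestly a submatrix of $\Lambda$: this is exactly where the degree budget $r=\max\{1,2k-2\}$ must be matched against the $2d-1$ columns of $\Lambda$ via $k\le d$, and it explains why the degree bound in the definition is tuned to the size of $\Lambda$. A minor separate check is the degenerate case $k=1$, where $r=1$ and the single-entry submatrix $(\lambda_{j_1}^{\ell_1})$ must be nonzero; this is immediate from full spark of $\Lambda$ as soon as $d\ge 2$.
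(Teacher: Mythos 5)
Your proof is correct and follows essentially the same route as the paper's: both translate the vanishing of a sparse $k$-partial annihilator at $k$ eigenvalues into a homogeneous system over a submatrix of $\Lambda$ and let the totally full spark hypothesis force the coefficient vector to zero. The only cosmetic difference is that the paper first uses full spark to deduce invertibility of $D$ and thereby skips the case $k=1$, whereas you handle $k=1$ inside the same submatrix argument (correctly noting it needs $d\ge 2$) and make the exponent bookkeeping $\ell_s\le 2d-2$ explicit.
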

 
 \begin{proof}
 Since the matrix $\Lambda$ has totally full spark, the matrix $D$ is invertible and we only need to check $k$-partial annihilators for $k\ge 2$.
 Let $p$ be a $k$-partial annihilator of $D$ of degree at most $2k-2$, $k\ge 2$. Then its coefficients are in the kernel of the submatrix of the matrix $\Lambda$ that is comprised of the columns that are generated by the eigenvalues that are roots of $p$. Since $\Lambda$ has totally full spark,  we cannot  have fewer than $k+1$ non-zero coefficients in $p$. 
 \end{proof}

Next, we consider the case of a Jordan cell. We will use the notation
\begin{equation}\label{Nil2}
N_{s}=\left( \begin{array}{cc}
0 & 0 \\
I_{s-1}& 0
\end{array} 
\right) = \left( \begin{array}{cccccc}
0 & 0 &\cdots & 0 &0&0\\
1& 0&\cdots& 0&0&0\\
0& 1&\cdots& 0&0&0\\
\vdots & \vdots & \ddots&\vdots&\vdots&\vdots\\
0&0&\cdots&1&0&0 \\
0&0&\cdots&0&1&0
\end{array} 
\right)
\end{equation}
for the $s\times s$  nilpotent matrix $N_s$, $s\in\N$. By $I_s$ we shall denote the $s\times s$  identity matrix.

\begin{proposition}
Let $J = J(\lambda)\in\mathbb K^{n\times n}$ be the Jordan cell given by $J = \lambda I_n+N_n$. Then $J(\lambda)$ is iteration regular
if and only if $\lambda\neq  0$.
 \end{proposition}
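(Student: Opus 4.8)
The plan is to reduce the entire statement to the shape of the minimal polynomial of the cell, namely $p^{J}(x)=(x-\lambda)^{n}$; this is immediate since $J-\lambda I_{n}=N_{n}$ satisfies $N_{n}^{\,n}=0$ while $N_{n}^{\,n-1}\neq 0$. For each $k\le n$ the unique monic divisor of $(x-\lambda)^{n}$ of degree $k$ is $(x-\lambda)^{k}$ (and there is none for $k>n$), so a polynomial $p$ is a $k$-partial annihilator of $J$ exactly when $(x-\lambda)^{k}\mid p$, i.e.\ when $\lambda$ is a root of $p$ of multiplicity at least $k$. The problem therefore turns into a question about how few non-zero coefficients a polynomial can carry while having a root of prescribed multiplicity at the non-zero point $\lambda$.

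For the forward implication I would argue the contrapositive. If $\lambda=0$ then $J=N_{n}$ is nilpotent, hence singular, and $q(x)=x$ is a $1$-partial annihilator of degree $1=\max\{1,2\cdot 1-2\}$ with a single non-zero coefficient, which is fewer than the required $k+1=2$. By exactly the reasoning in item (2) of Examples~\ref{expiterreg}, $J(0)$ is not iteration regular, so $\lambda\neq 0$ is necessary.

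For the converse, suppose $\lambda\neq 0$, so that $J$ is invertible and, as noted in item (2) of Examples~\ref{expiterreg}, only $k\ge 2$ needs to be checked, where the degree cap is $r=2k-2$. Let $p$ be any $k$-partial annihilator; by the first paragraph $\lambda$ is a root of $p$ of multiplicity at least $k$. The engine of the proof is a \emph{lacunary} (fewnomial) estimate: a non-zero polynomial with exactly $t$ non-zero coefficients has every non-zero root of multiplicity at most $t-1$. Granting this, a $k$-partial annihilator $p$ with $t$ non-zero coefficients satisfies $k\le t-1$, hence $t\ge k+1$; in particular this holds for every $k$-partial annihilator of degree at most $2k-2$, and so $J(\lambda)$ is iteration regular. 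Note that the degree cap plays no role here: the bound on the number of coefficients comes purely from the multiplicity of the root $\lambda$.

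The one substantive ingredient, and the step I expect to be the crux, is the lacunary estimate, which I would establish by induction on the number $t$ of non-zero terms. Writing $p(x)=\sum_{i=1}^{t}c_{i}x^{e_{i}}$ with $c_{i}\neq 0$ and $e_{1}<\cdots<e_{t}$, one may divide by $x^{e_{1}}$ without altering the non-zero roots or their multiplicities, so assume $p(0)\neq 0$. Then $p'$ has exactly $t-1$ non-zero terms, since differentiation (in characteristic zero) removes the constant term and annihilates nothing among the remaining positive-exponent monomials. Moreover any non-zero root of $p$ of multiplicity $\mu$ is a root of $p'$ of multiplicity $\mu-1$, so the inductive hypothesis applied to $p'$ gives $\mu-1\le(t-1)-1$, i.e.\ $\mu\le t-1$; the base case $t=1$ is vacuous because a monomial has no non-zero roots. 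Assembling these pieces yields the proposition.
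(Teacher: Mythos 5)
Your proof is correct, and it takes a genuinely different route from the paper's. The paper translates the condition into linear algebra: it encodes the coefficient vector $c$ of a $k$-partial annihilator of degree at most $2k-2$ as a kernel vector of the $k\times(2k-1)$ matrix $\Lambda_k$ built from the Krylov iterates $J^j e_1$, so iteration regularity becomes the assertion that each $\Lambda_k$ has full spark; this is then deduced from strict total positivity of $\Lambda(\lambda)$ for $\lambda>0$ (via \cite[Theorem 2.8]{P10}) together with a sign-of-minors symmetry relating $\Lambda(\lambda)$ and $\Lambda(-\lambda)$ to cover $\lambda<0$. You instead stay entirely at the polynomial level: since $p^J(x)=(x-\lambda)^n$, a $k$-partial annihilator is exactly a polynomial with $\lambda$ as a root of multiplicity at least $k$, and your lacunary (fewnomial) lemma --- a nonzero polynomial with $t$ nonzero terms has every nonzero root of multiplicity at most $t-1$, proved by an elementary induction via differentiation after dividing out $x^{e_1}$ --- immediately forces $t\ge k+1$. (Your induction is sound, including the slightly informal step where $\mu-1=0$: there $\mu=1\le t-1$ directly since $t\ge 2$.) The necessity direction, via singularity of $J(0)$ and the annihilator $q(x)=x$, matches the paper's appeal to Examples~\ref{expiterreg}~(2). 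What each approach buys: yours is self-contained, avoids the citation to total positivity theory, and proves strictly more, since the degree cap $2k-2$ is never used --- every $k$-partial annihilator of \emph{any} degree has at least $k+1$ nonzero coefficients when $\lambda\neq0$; the paper's matrix formulation, conversely, establishes the full spark property of the one-point Hermite--Vandermonde matrices $\Lambda_k$ themselves (of which your lemma is the degree-free restatement), which dovetails with the full spark machinery of Definition~\ref{spark} and Proposition~\ref{iterregD} used elsewhere in the paper.
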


% \marginnote{ Akram's notes: is there a better way of writing this proof?}
 \begin{proof}
 We only need to prove that if $\lambda\neq 0$ then $J$ is iteration regular. The other implications stated in the theorem immediately follow since iteration regularity implies invertibility.
 
 Assume $J$ is invertible, i.e., $\lambda\neq 0$.
 Let $e_1$ be the first standard basic vector in $\mathbb K^n$. Consider the $n\times (2n-1)$ matrix $\Lambda = \Lambda(\lambda)$ whose  first column $f_1$ equals $e_1$ and each $j$-th column $f_j$ satisfies $f_j = J(\lambda)f_{j-1}$, $j\ge 2$. We shall denote by $\Lambda_k$ the submatrix of $\Lambda$ formed by the first $k$ rows and $2k-1$ columns: 
\[
\Lambda_{k}=\left( \begin{array}{ccccccc}
1 & \lambda &\cdots & \lambda^{k-1} &\lambda^k&\cdots&\lambda^{2k-2}\\
%1& 0&\cdots& 0&0&0\\
0& 1&\cdots& (k-1)\lambda^{k-2}&k\lambda^{k-1}&\cdots&(2k-2)\lambda^{2k-3}\\
\vdots & \vdots & \ddots&\vdots&\vdots&\ddots&\vdots\\
0&0&\cdots&1&k\lambda&\cdots&{{2k-2}\choose{k}}\lambda^{k-1} \\
%0&0&\cdots&0&1&0
\end{array} 
\right).
\] 
 Since $J$ is invertible, we do not need to check the $1$-partial annihilators.   Let $p$ be a $k$-partial annihilator of $J$ of degree at most $2k-2$, $k\ge 2$, and $c \in\mathbb K^n$ be the vector of its coefficients (amended with zeroes, if necessary). Observe that we must have $\Lambda_k c = 0$. Thus, the matrix $J$ is iteration regular if and only if each $k\times k$ submatrix of $\Lambda_k$ is invertible, $k=1,\ldots, n$, or, in other words, when $\Lambda_k$ has full spark. The latter, however, holds if and only if $\lambda \neq 0$. 
  
The above is a more or less standard fact from linear algebra, we sketch the idea of the proof for completeness.
For $\lambda > 0$, the matrix $\Lambda(\lambda)$ is upper  strictly totally positive according to the definition in \cite[p.~47]{P10}. This follows from \cite[Theorem 2.8]{P10} by a simple inductive argument. Thus, every $k\times k$ minor of the matrix $\Lambda_k$ is positive when $\lambda >  0$. For $\lambda < 0$, it suffices to notice that, by definition of determinant, each  minor of $\Lambda(\lambda)$ has the same absolute value as the corresponding minor of the matrix $\Lambda(-\lambda)$.
 \end{proof}

Iteration regular matrices are useful for us because of the following special property possessed by their Krylov subspaces.

\begin{definition}
A  \emph{Krylov subspace} of order $r$ generated by a matrix $A$ and a nonzero vector $x\in\mathbb K^n$ is
$\mathcal K_r(A,x) = {\rm span} \{x, Ax, \ldots, A^{r-1}x\}.$ The \emph{maximal Krylov subspace}  of the matrix $A$ and  the nonzero vector $x\in\mathbb K^n$ is $\mathcal K_{\infty}(A,x) = {\rm span} \{x, Ax, \ldots\}.$
\end{definition}

\begin{proposition}\label{krylovspan}
Assume that $A\in\mathbb K^{n\times n}$ is iteration regular and a non-zero vector $x\in\mathbb K^n$ is such that the maximal Krylov subspace $\mathcal K_\infty(A,x)$ has dimension $k$. Then any $k$  vectors from the system {$\{x ,\ldots, A^{r}x\}$, $r=\max \{1,2k-2\}$,} form a basis in $\mathcal K_k(A,x) = \mathcal K_\infty(A,x)$.
\end{proposition}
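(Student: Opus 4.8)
The plan is to translate the linear (in)dependence of the iterated vectors into a polynomial identity and then invoke iteration regularity directly. First I would record the standard fact that $k=\dim\mathcal K_\infty(A,x)$ equals the degree of the $(A,x)$-annihilator $p^A_x$: the vectors $x,Ax,\ldots,A^{k-1}x$ are linearly independent while $A^k x$ lies in their span, so they already form a basis of $\mathcal K_\infty(A,x)$, which gives the asserted equality $\mathcal K_k(A,x)=\mathcal K_\infty(A,x)$. Since the target space is $k$-dimensional, it suffices to prove that every choice of $k$ vectors from $\{x,Ax,\ldots,A^{r}x\}$, $r=\max\{1,2k-2\}$, is linearly independent.

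I would argue by contradiction. Suppose $A^{i_1}x,\ldots,A^{i_k}x$ with $0\le i_1<\cdots<i_k\le r$ are linearly dependent, so that $\sum_{j=1}^k c_j A^{i_j}x=0$ for scalars $c_j$ not all zero. Setting $q(t)=\sum_{j=1}^k c_j t^{i_j}$, this reads $q(A)x=0$, so $q$ is a non-zero polynomial of degree at most $r$ carrying at most $k$ non-zero coefficients (it is a sum of $k$ monomials).

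The crucial step is to recognize $q$ as a $k$-partial annihilator of $A$. Because $q(A)x=0$, the set of polynomials annihilating $x$ is the ideal generated by $p^A_x$, so $p^A_x\mid q$. On the other hand, $p^A(A)=0$ forces $p^A(A)x=0$, whence $p^A_x\mid p^A$. Thus $p^A_x$ is a common divisor of $q$ and $p^A$ of degree exactly $k=\deg p^A_x$, which is precisely the condition in Definition \ref{PartAnn} that $q$ be a $k$-partial annihilator of $A$. Here one also notes $k\ge 1$ (as $x\neq 0$), so the degree bound $r=\max\{1,2k-2\}$ is exactly the one appearing in Definition \ref{IterReg}.

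Finally I would apply the hypothesis: iteration regularity guarantees that a $k$-partial annihilator of degree at most $r=\max\{1,2k-2\}$ has at least $k+1$ non-zero coefficients, contradicting the fact that $q$ has at most $k$. Hence no such dependence exists, and every $k$-element subset of $\{x,Ax,\ldots,A^{r}x\}$ is a basis of $\mathcal K_k(A,x)$. The main obstacle is this middle step: one must pass from the \emph{local} annihilator $p^A_x$ of the vector to the \emph{global} spectral notion of a $k$-partial annihilator of $A$, and the divisibility chain $p^A_x\mid p^A$ is exactly what channels the vector-level linear dependence into the matrix-level hypothesis of iteration regularity.
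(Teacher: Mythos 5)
Your proof is correct and takes essentially the same route as the paper's: both translate a vanishing linear combination of $k$ iterates into a polynomial $q$ with at most $k$ non-zero coefficients and degree at most $\max\{1,2k-2\}$, use the divisibility chain $p^A_x \mid q$ together with $p^A_x \mid p^A$ to recognize $q$ as a $k$-partial annihilator, and then invoke iteration regularity (Definition \ref{IterReg}) to force $q=0$. The only difference is cosmetic: your uniform contradiction argument also covers $k=1$, which the paper dismisses as trivial.
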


\begin{proof}
The dimension $k$ of the maximal Krylov subspace $\mathcal K_\infty(A,x)$ is equal to the degree $r^A_x\le r^A$ of the $(A,x)$-annihilator $p^A_x$  (see Definition \ref  {minpol}~(2)) (note that  the polynomial $p^A_x$ divides the minimal polynomial $p^A$ of $A$). { The case for which $k=1$ is trivial.  It suffice to consider the case of $k \geq 2$}. Let $E=\{A^{J_i}x: i=1,\dots,k\}$ be $k$ vectors from $\{x, Ax,\ldots, A^{2k-2}x\}$ and consider the vanishing linear combination $$\sum\limits_{i=1}^k c_iA^{J_i}x=0.$$  The left hand side of the last identity is a polynomial $q(A)$ of degree no larger than $2k-2$ applied to $x$. Since $q(A)x=0$, the {  $(A,x)$-annihilator }   $p^A_x$ divides $q$. Therefore, $q$ has $k=r^A_x$ zeroes in common with $p^A$. Since $A$ is iteration regular, $r^A_x\le r^A$, and  $q$ has at most $k$ non-zero coefficients,  the polynomial $q$ and all its coefficients $c_i$ must be zero.
\end{proof}

\subsection {Jordan Matrices and Associated Projections} \label {JMAP} 
The proofs of our main results use the Jordan decomposition of a matrix as well as certain projections that are associated with them. 

Consider a Jordan matrix $J$
\begin{equation}\label{Jordan1}
J= \left( \begin{array}{cccc}
J_1 & 0 & 0 &0 \\
0 & J_2 & 0&0 \\
0 & 0 & \ddots&0 \\
0&0&0&J_d\end{array} \right)
\end{equation}

In \eqref{Jordan1}, for $s=1,\ldots,d$,  we have $J_s=\lambda_s I_s+M_s$ and $M_s$ is an $h_s\times h_s$ nilpotent block-matrix of the form:

\begin{equation}\label{Nil1}
M_s= \left( \begin{array}{cccc}
N_{s_1} & 0 & 0 &0 \\
0 & N_{s_2}& 0&0 \\
0 & 0 & \ddots&0 \\
0&0&0&N_{s_{r_s}}\end{array} \right)
\end{equation}
where each $N_{s_i}$ is a $t_{i}^{(s)}\times t_{i}^{(s)}$ cyclic nilpotent matrix of the form \eqref{Nil2}
with $t_{1}^{(s)} \geq t_{2}^{(s)}\geq \ldots \geq t_{r_s}^{(s)}$ and $t_{1}^{(s)} + t_{2}^{(s)}+\cdots+ t_{r_s}^{(s)}=h_s$. Also $ h_1 + \ldots + h_n = n$. The matrix $J$  has $n$ rows and distinct eigenvalues $\lambda_j , j = 1,\ldots, d.$

\begin {definition} \label {defJord} Let $k_j^{s}$ denote the index corresponding to the first row of the block $N_{s_j}$ from the matrix $J$,
and let $e_{{k}_{j}^{s}}$ be the corresponding elements of the standard basis of $\mathbb{C}^n$, so that each  $e_{{k}_{j}^{s}}$ is a cyclic vector associated to the respective block. We also define  $E_s=span\{e_{{k}_{j}^{s}}: j=1,\ldots, r_s\},$ for $s=1,\ldots,d$, and $P_s$ will denote the orthogonal projection onto
$E_s$. The family $\mathscr P_J=\{P_j: j=1\ldots d\}$ comprised of these projections will be called the \emph{penthouse family} of the matrix $J$. %Finally,   
\end {definition}

%Recall that the $(J,b)$-annihilator $p_b^{J}$ %of a vector b 
%s the monic polynomial of smallest degree, such that $p_b^{J}(J)b=0$.

\section{Main Results}
\label {Main}

\subsection {Finite dimensional case}
In the first theorem of this section, we present our most general sufficient conditions for phaseless reconstruction in dynamical sampling in the finite dimensional case.
We begin, however, with a proposition that gives a necessary condition. 
%The proposition follows immediately from Theorem \ref{spanning} and Proposition \ref{locness}.

\begin {proposition} \label {NecConLC} Let $A\in \mathbb{R}^{n\times n}$ be such that $A^T=B^{-1}JB$, where $J\in\CC^{n\times n}$ is a Jordan matrix of the form \eqref{Jordan1}.  { Let also $\{\phi_i: i\in \mathscr I\}$ be a set of vectors in $\R^n$, $\Psi = \{\psi_i=B\phi_i: i\in \mathscr I\}\subset \CC^n$,} and $r_i =r^J_{\psi_i}$. If 
\begin{equation}\label{sete}
\mathcal E = \{\phi_i, (A^T)\phi_i,\ldots,(A^T)^{2r_i-2}\phi_i: i\in \mathscr I\}
\end{equation}
does phaseless reconstruction on $\mathbb{R}^n$ then the set $\Psi $  is locally complementary with respect to the penthouse family $\mathscr P_J$ {\color {green}(see Definition \ref{defJord})}. 
\end {proposition}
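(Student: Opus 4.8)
The plan is to reduce everything to the complement property of $\mathcal E$ and then transport it, through the intertwining relation $B(A^T)^m=J^mB$, to the penthouse projections. By Theorem \ref{spanning}, the hypothesis that $\mathcal E$ does phaseless reconstruction on $\R^n$ is equivalent to $\mathcal E$ having the complement property: for every disjoint partition $\mathcal E=\mathcal E_1\cup\mathcal E_2$, at least one $\mathcal E_\ell$ spans $\R^n$. So I would fix an \emph{arbitrary} partition $\mathscr I=\mathscr I_1\cup\mathscr I_2$, let it induce the partition $\mathcal E=\mathcal E_1\cup\mathcal E_2$ with $\mathcal E_\ell=\{(A^T)^m\phi_i: i\in\mathscr I_\ell,\ 0\le m\le 2r_i-2\}$, and apply the complement property to obtain an index $\ell\in\{1,2\}$ with $\operatorname{span}_\R\mathcal E_\ell=\R^n$. (Minor point: if the two halves happen to share a vector, move it to one side only; shrinking the other spanning set is harmless, so one still gets a genuine half spanning $\R^n$.) The goal is then to show that this same $\ell$ witnesses local complementarity, i.e.\ that $\{P_s\psi_i: i\in\mathscr I_\ell\}$ spans $E_s$ for every $s\in\{1,\dots,d\}$; by Definition \ref{LC}, since the partition was arbitrary, this is exactly what must be proved.

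The next step is a change of field and of coordinates. Since $\mathcal E_\ell\subset\R^n$ consists of real vectors, spanning $\R^n$ over $\R$ is equivalent to spanning $\CC^n$ over $\CC$ (the rank of a real matrix is insensitive to the field). Applying the invertible matrix $B$ and using $B(A^T)^m\phi_i=J^mB\phi_i=J^m\psi_i$, this becomes $\operatorname{span}_\CC\{J^m\psi_i: i\in\mathscr I_\ell,\ 0\le m\le 2r_i-2\}=\CC^n$. For each fixed $i$ the set $\{J^m\psi_i: 0\le m\le 2r_i-2\}$ spans the maximal Krylov subspace $\mathcal K_\infty(J,\psi_i)$: its dimension is $r_i=r^J_{\psi_i}$, so $\{\psi_i,\dots,J^{r_i-1}\psi_i\}$ already spans it and $2r_i-2\ge r_i-1$ supplies more than enough powers (the dimension count is as in the proof of Proposition \ref{krylovspan}). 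Hence the spanning statement reads $\sum_{i\in\mathscr I_\ell}\mathcal K_\infty(J,\psi_i)=\CC^n$.

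Finally I would apply each penthouse projection $P_s$. Writing $J=\bigoplus_s J_s$ with $J_s=\lambda_s I+M_s$, and $\psi^{(s)}$ for the component of a vector $\psi$ in the $s$-th coordinate block $W_s$, the key computation is the identity $P_sJ^m\psi=\lambda_s^mP_s\psi$ for all $m\ge 0$. This holds because $E_s$ is spanned by standard basis vectors lying in $W_s$, so $P_s$ annihilates the other blocks and $P_sJ^m\psi=P_sJ_s^m\psi^{(s)}$; expanding $J_s^m=\sum_{k=0}^m\binom mk\lambda_s^{m-k}M_s^k$ and noting that $M_s^k\psi^{(s)}$ has vanishing entries in the top-of-block positions $k_j^s$ for $k\ge 1$ gives $P_sM_s^k\psi^{(s)}=0$ and leaves only the $k=0$ term. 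Consequently $P_s\bigl(\mathcal K_\infty(J,\psi_i)\bigr)=\operatorname{span}\{P_s\psi_i\}$, and applying $P_s$ to $\sum_{i\in\mathscr I_\ell}\mathcal K_\infty(J,\psi_i)=\CC^n$ — noting $P_s(\CC^n)=E_s$ — yields $E_s=\operatorname{span}\{P_s\psi_i: i\in\mathscr I_\ell\}$, as required. I expect the heart of the argument, and the only place needing care, to be this collapsing identity: that projecting an entire Krylov subspace onto the penthouse range retains only the span of the single generating vector $P_s\psi_i$. The remaining pieces — the real/complex rank bookkeeping, the Krylov dimension count, and the disjointness bookkeeping in the partition step — are routine.
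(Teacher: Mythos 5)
Your proposal is correct, and its skeleton coincides with the paper's: fix an arbitrary partition of $\mathscr I$, use Theorem \ref{spanning} to extract a half $\mathcal E_\ell$ spanning $\R^n$, pass to $\CC^n$ and conjugate by $B$ (the content of Lemma \ref{invertiblecomp}, parts (2)--(3)) to get that $\{J^m\psi_i : i\in\mathscr I_\ell,\ 0\le m\le 2r_i-2\}$ spans $\CC^n$, and truncate the powers to $0,\dots,r_i-1$ using the degree of the $(J,\psi_i)$-annihilator. The one genuine divergence is the final step: the paper at this point simply invokes Theorem \ref{tadcul} from \cite{ACMT14} (the equivalence between $\{J^j b_i\}$ being a frame for $\CC^n$ and $\{P_s b_i\}$ being a frame for each $E_s$), of which only the easy implication is needed here, whereas you prove that implication from scratch via the collapsing identity $P_sJ^m\psi=\lambda_s^m P_s\psi$ --- correct, since the lower-shift blocks satisfy $P_sM_s^k=0$ on the block for $k\ge 1$, so $P_s$ maps each Krylov subspace $\mathcal K_\infty(J,\psi_i)$ onto $\operatorname{span}\{P_s\psi_i\}$ and projecting $\sum_{i\in\mathscr I_\ell}\mathcal K_\infty(J,\psi_i)=\CC^n$ gives $E_s=\operatorname{span}\{P_s\psi_i: i\in\mathscr I_\ell\}$. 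What your route buys is a self-contained, elementary proof exposing exactly why the penthouse projections see only the $m=0$ term of the iterates; what the paper's citation buys is brevity and alignment with the frame-theoretic formulation it reuses (in both directions) for the sufficiency results, e.g.\ in Lemma \ref{Jordan}. A small bonus on your side: your remark about reassigning a vector shared by the two induced halves addresses a multiset subtlety in applying the complement property that the paper's proof passes over silently.
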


%\marginnote{Ilya: we have the same set in \eqref{sete}, \eqref{vectors1},  \eqref{vectors31},  \eqref{vectors3}, and  \eqref{vectors4}. I leave it to you to fix it the way you want. Also, repeating the set $Y$ may be a bit excessive.}
{If in addition to the  local complementarity condition in the previous proposition, we require that $A$ is iteration regular  (see Definition \ref {IterReg}), then we obtain sufficient conditions for phaseless reconstruction.}
\begin{theorem}\label{GenRealMatJorD}
Let $A\in \mathbb{R}^{n\times n}$ be such that $A^T=B^{-1}JB$, where $J\in\CC^{n\times n}$ is a Jordan matrix of the form \eqref{Jordan1}.  Let also { $\{\phi_i: i\in \mathscr I\}$ be a set of vectors in $\R^n$}, $\Psi = \{\psi_i=B\phi_i: i\in \mathscr I\}$ be a set of vectors in $\CC^n$ and $r_i =r^J_{\psi_i}$ {\color {green} (see Definition \ref {minpol})}.
 If $J$ is iteration regular {\color {green} (see Definition \ref {IterReg})} and  the set $\Psi $  is locally complementary with respect to the penthouse family $\mathscr P_J$ {\color {green}  (see Definition \ref{defJord})}  then the set of vectors 
{ \begin{equation}\label{vectors1}
\mathcal E = \{\phi_i, (A^T)\phi_i,\ldots,(A^T)^{2r_i-2}\phi_i: i\in \mathscr I\}
\end{equation}
}
does phaseless reconstruction on $\mathbb{R}^n$.  In other words, any $f\in \R^n$ can be uniquely determined up to a sign from its unsigned measurements $$Y=\{\lvert \la A^lf, \phi_i\ra \rvert:  i \in \mathscr I,\; l=0,\dots, 2r_i-2\}.$$
\end{theorem}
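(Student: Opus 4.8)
The plan is to use Theorem~\ref{spanning}, which reduces phaseless reconstruction on $\R^n$ to verifying the complement property of the set $\mathcal E$. Thus I must show: for every disjoint partition $\mathcal E = \mathcal E_1 \cup \mathcal E_2$, at least one of $\overline{\mathrm{span}\,\mathcal E_1}$ or $\overline{\mathrm{span}\,\mathcal E_2}$ equals $\R^n$. The natural strategy is to transport the problem to the Jordan side via the similarity $A^T = B^{-1}JB$, reduce a partition of $\mathcal E$ to a corresponding partition of the index set $\mathscr I$, and then use the local complementarity hypothesis together with iteration regularity to conclude that the span is everything.

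First I would rewrite the vectors. Since $A^T = B^{-1}JB$, we have $(A^T)^\ell \phi_i = B^{-1} J^\ell B \phi_i = B^{-1} J^\ell \psi_i$. Hence $B\,\mathcal E = \{\psi_i, J\psi_i, \ldots, J^{2r_i-2}\psi_i : i\in\mathscr I\}$, and since $B$ is invertible, $\overline{\mathrm{span}\,\mathcal E_\ell} = \R^n$ if and only if the corresponding image under $B$ spans $\CC^n$. (One must track real-versus-complex scalars carefully, but $A$ being real and $\{\phi_i\}\subset\R^n$ means the relevant real span matches the complex span of the $B$-images on the appropriately structured Jordan blocks; this is a routine bookkeeping point.) So it suffices to work with the Krylov-type system $\{J^\ell\psi_i : 0\le\ell\le 2r_i-2,\, i\in\mathscr I\}$ in $\CC^n$.

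Next, a partition $\mathcal E = \mathcal E_1 \cup \mathcal E_2$ of the vectors does not a priori respect the grouping by index $i$; the cleanest route is to observe that it suffices to prove the complement property for partitions $\{\mathscr I_1,\mathscr I_2\}$ of the \emph{index set}, i.e., where each full Krylov block $\{J^\ell\psi_i : 0\le\ell\le 2r_i-2\}$ goes entirely into $\mathcal E_1$ or entirely into $\mathcal E_2$, and then argue that finer partitions only enlarge the relevant spans. Given an index partition, apply local complementarity with respect to $\mathscr P_J$: there is $\ell\in\{1,2\}$ such that for every eigenvalue index $j$, the projected vectors $\{P_j\psi_i : i\in\mathscr I_\ell\}$ span the range $E_j$ of $P_j$. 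The goal is then to promote this spanning of the cyclic ``top'' vectors $E_j$ to spanning of the entire generalized eigenspace, and then of all of $\CC^n$, using the Krylov iterates $J^\ell\psi_i$.

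The main obstacle, and the heart of the argument, is exactly this promotion step: showing that local complementarity at the level of the penthouse projections, combined with iteration regularity, forces $\{J^\ell\psi_i : 0\le\ell\le 2r_i-2,\, i\in\mathscr I_\ell\}$ to span all of $\CC^n$. Here I would block-diagonalize by eigenvalue and argue block by block. Within the generalized eigenspace for $\lambda_j$, iteration regularity (via Proposition~\ref{krylovspan}, applied to the restriction of $J$ to the cyclic subspace generated by each $\psi_i$) guarantees that the $2r_i-2$ iterates $\{J^\ell\psi_i\}$ already recover the full maximal Krylov subspace $\mathcal K_\infty(J,\psi_i)$; the number $2r_i-2$ is precisely the threshold from that proposition. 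The condition that $\{P_j\psi_i : i\in\mathscr I_\ell\}$ spans $E_j$ then says the cyclic generators collectively reach the top of every Jordan sub-block, and because $J = \lambda_j I + M_j$ is invertible (iteration regularity forces $\lambda_j\neq 0$, cf.\ the Jordan-cell proposition) the Krylov iterates fill each cyclic chain downward. I expect that combining these two facts — spanning at the top from local complementarity and filling the chains from invertibility plus Proposition~\ref{krylovspan} — yields spanning of the whole generalized eigenspace, and summing over $j$ gives all of $\CC^n$; the delicate point to verify carefully is that a single uniform choice of $\ell$ works simultaneously for all eigenvalue blocks, which is exactly what the definition of local complementarity supplies.
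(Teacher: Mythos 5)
Your overall frame---pass to the Jordan side via the similarity and Lemma~\ref{invertiblecomp}, verify the complement property via Theorem~\ref{spanning}, and finish with local complementarity and Theorem~\ref{tadcul}---is exactly the paper's route (its proof of the theorem is Lemma~\ref{Jordan} combined with those two results). But your argument has a genuine gap at the reduction step, and it sits precisely where the theorem's key hypothesis must do its work. You claim it suffices to check the complement property for partitions that respect the index blocks, ``and then argue that finer partitions only enlarge the relevant spans.'' That implication runs the wrong way: if a partition $\mathcal F_1\cup\mathcal F_2$ splits the block $\{J^\ell\psi_i: 0\le \ell\le 2r_i-2\}$, then neither side contains the whole block, and any block-respecting coarsening replaces, say, $\mathcal F_1$ by a set that is \emph{not} a subset of $\mathcal F_1$; spanning for the coarsened partition therefore tells you nothing about $\mathrm{span}\,\mathcal F_1$. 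Worse, if block-respecting partitions sufficed, iteration regularity would be superfluous: for such partitions the consecutive iterates $\psi_i, J\psi_i,\dots,J^{r_i-1}\psi_i$ already span $\mathcal K_\infty(J,\psi_i)$ by the definition of $r_i=r^J_{\psi_i}$, and Theorem~\ref{tadcul} plus local complementarity would finish---yet the conclusion fails without iteration regularity (cf.\ Examples~\ref{expiterreg}(4), where a relation such as $J^2\psi=-c\psi$ defeats exactly a block-splitting partition). Relatedly, you invoke Proposition~\ref{krylovspan} only for the trivial assertion that the full window $\{J^\ell\psi_i\}_{\ell=0}^{2r_i-2}$ spans $\mathcal K_\infty(J,\psi_i)$; its actual content---that \emph{any} $r_i$ vectors chosen from that window of $2r_i-1$ vectors form a basis of $\mathcal K_\infty(J,\psi_i)$, which is where iteration regularity enters---is never used in your argument.

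The paper's Lemma~\ref{Jordan} closes this gap with a counting argument you are missing. Given an \emph{arbitrary} partition $\mathcal F_1\cup\mathcal F_2$, set $\Gamma_i^\ell=\{j: J^j\psi_i\in\mathcal F_\ell\}$; since $|\Gamma_i^1|+|\Gamma_i^2|=2r_i-1$, exactly one side receives at least $r_i$ vectors from block $i$. Assigning each $i$ to that side produces a genuine partition $\{\mathscr I_1,\mathscr I_2\}$ of $\mathscr I$, and the full strength of Proposition~\ref{krylovspan} shows that the (arbitrary, not necessarily consecutive) vectors that side received from block $i$ already span all of $\mathcal K_\infty(J,\psi_i)$. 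Only then is local complementarity applied to $\{\mathscr I_1,\mathscr I_2\}$, after which Theorem~\ref{tadcul} converts spanning of each $E_j$ by $\{P_j\psi_i: i\in\mathscr I_\ell\}$ directly into $\mathrm{span}\{J^\ell\psi_i: i\in\mathscr I_\ell,\ \ell\ge 0\}=\CC^n$; you should cite it rather than attempt the by-hand ``fill the chains downward'' promotion, which as sketched glosses over the interaction of several Jordan sub-blocks sharing one eigenvalue (the reason the penthouse family consists of the cyclic top vectors of \emph{each} sub-block). Your real-versus-complex reduction is fine---it is Lemma~\ref{invertiblecomp}(2)--(3). With the counting step inserted and Proposition~\ref{krylovspan} used at full strength, your outline becomes the paper's proof.
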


Using Proposition \ref {iterregD} and the above result, we get the following theorem for the case of diagonalizable matrices. % we get

\begin{theorem}\label{GenRealMat}
Let $A\in \mathbb{R}^{n\times n}$ be such that $A^T=B^{-1}DB$, where $D\in\CC^{n\times n}$ is a diagonal matrix with 
 $d$ distinct eigenvalues $\lambda_1$, \ldots, $\lambda_d$.
 %the spectral decomposition  $D=\sum_{j=1}^{k}\lambda_jP_j$ so that $\mathscr P_D = \{P_j: j=1\ldots k\}$ is a resolution of the identity in $\CC^{n\times n}$. 
 Let also $\Lambda= (\lambda_j^\ell)$, 
 $j = 1,\ldots, d$, $\ell = 0,\ldots, 2d-2$, be a $d\times (2d-1)$ matrix comprised of the powers of these eigenvalues,
 $\Psi = \{\psi_i=B\phi_i: i\in\mathscr I\}$ be a set of vectors in $\CC^n$, and $r_i =r^D_{\psi_i}$. If the  matrix  $\Lambda$ has
 totally full spark and the set $\Psi $  is {locally complementary} with respect to the penthouse family $\mathscr P_D$     then the set of vectors 
\begin{equation}\label{vectors31}
\mathcal E = \{\phi_i, (A^T)\phi_i,\cdots,(A^T)^{2r_i-2}\phi_i: i\in \mathscr I\}
\end{equation}
does phaseless reconstruction on $\mathbb{R}^n$.  In other words, any $f\in \R^n$ can be uniquely determined up to a sign from its unsigned measurements {$$Y=\{\lvert \la A^lf, \phi_i\ra \rvert:  i \in \mathscr I,\; l=0,\dots, 2r_i-2\}.$$}
\end{theorem}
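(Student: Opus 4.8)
The plan is to recognize Theorem \ref{GenRealMat} as a direct specialization of Theorem \ref{GenRealMatJorD}, with the missing iteration-regularity hypothesis supplied by Proposition \ref{iterregD}. Essentially all the work is in checking that the diagonal setting fits verbatim into the Jordan-matrix framework, so that the two earlier results can simply be chained together.

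First I would observe that the diagonal matrix $D$ is itself a Jordan matrix of the form \eqref{Jordan1}: grouping coordinates by eigenvalue, each block equals $\lambda_s I_{h_s}$, so the nilpotent part $M_s$ vanishes, every cyclic nilpotent block $N_{s_i}$ is $1\times 1$ (that is, $t_i^{(s)}=1$ and $r_s=h_s$), and consequently the cyclic vectors $e_{k_j^s}$ range over exactly the standard basis vectors belonging to $\lambda_s$. Hence the range $E_s$ is precisely the $\lambda_s$-eigenspace of $D$, and the penthouse family $\mathscr P_D=\{P_s\}$ of Definition \ref{defJord} reduces to the orthogonal projections onto the eigenspaces. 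In particular, the local complementarity hypothesis on $\Psi$ with respect to $\mathscr P_D$ is meaningful and is exactly the one required downstream. I would also record that, since here $J=D$, the annihilator degrees $r_i=r^D_{\psi_i}$ appearing in \eqref{vectors31} coincide with the $r_i=r^J_{\psi_i}$ used in \eqref{vectors1}, so the index ranges in the two defining sets agree.

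Next, since the $d\times(2d-1)$ matrix $\Lambda=(\lambda_j^\ell)$ has totally full spark by hypothesis, Proposition \ref{iterregD} applies directly and yields that $D$ is iteration regular. This is the sole place where the totally-full-spark condition enters, and it is exactly what converts it into the iteration-regularity hypothesis of the general theorem.

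With these two observations in hand, every hypothesis of Theorem \ref{GenRealMatJorD} is met for the choice $J=D$: we have $A^T=B^{-1}DB$ with $D$ a Jordan matrix, $D$ iteration regular, and $\Psi=\{B\phi_i\}$ locally complementary with respect to $\mathscr P_D$. Invoking that theorem then gives that the set $\mathcal E$ in \eqref{vectors31} does phaseless reconstruction on $\mathbb{R}^n$, and the ``in other words'' reformulation in terms of the unsigned measurements $Y$ follows from Lemma \ref{PhRec}. Since the argument is purely a matter of specializing and citing prior results, I do not expect a genuine obstacle; the one point demanding care is verifying that the penthouse family of a diagonal matrix really collapses to the eigenspace projections, so that the local complementarity condition is transported correctly from the general Jordan statement to the diagonal one.
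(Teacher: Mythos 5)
Your proof is correct and follows exactly the paper's route: the paper obtains Theorem \ref{GenRealMat} by combining Proposition \ref{iterregD} (totally full spark of $\Lambda$ implies $D$ is iteration regular) with Theorem \ref{GenRealMatJorD}, precisely as you do. Your explicit check that a diagonal matrix is a Jordan matrix of the form \eqref{Jordan1} whose penthouse family $\mathscr P_D$ collapses to the eigenspace projections, and that the degrees $r_i=r^D_{\psi_i}$ agree with those in \eqref{vectors1}, spells out a specialization the paper leaves implicit.
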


\begin {remark} Note that the matrix $A$ in Theorem \ref {GenRealMatJorD} or  \ref {GenRealMat} is necessarily invertible since an iteration regular matrix is always invertible { (see example \ref {expiterreg} (2))}. 
\end{remark}

\begin {remark} \label {necnotsuf} The condition on the matrix $\Lambda$  in Theorem \ref {GenRealMat} is not necessary. For example, let $D$ be a $2 \times 2$ diagonal matrix with the spectrum 
$\sigma(D) = \{1,-1\}$ so that $\Lambda = \left(
\begin{array}{rrrr} 1&1&1&1 \\  1&-1&1&-1\end{array}\right)$
does not have totally full spark. For this case,  ${\rm{rank}}\, P_j=1$, $j = 1,2$. Consider two vectors $\phi_1 = \left(
\begin{array}{c} 1 \\  1\end{array}\right)$ and $\phi_2 = \left(
\begin{array}{c} 1 \\  2\end{array}\right)$. Then
\[
\mathcal E = \left\{\left(
\begin{array}{c} 1 \\  1\end{array}\right), \left(
\begin{array}{c} 1 \\  -1\end{array}\right), \left(
\begin{array}{c} 1 \\  2\end{array}\right), \left(
\begin{array}{c} 1 \\  -2\end{array}\right)\right\}
\]
has complement property in $\R^2$. Hence,  $\mathcal E$ does phaseless reconstruction in $\R^2$.

However, local complementarity alone is not sufficient for the complementary property. For example, let $D$ be a $4\times 4$ matrix with the spectrum $\sigma(D) = \{\lambda_1,\lambda_2\}$, with $\lambda_1=1$ and $\lambda_2=-1$ each having the algebraic (and geometric) multiplicity $2$. Then ${\rm{rank}}\, P_j =2$, $j = 1,2$. Consider vectors
$\psi_i = \left(
\begin{array}{c} 1 \\  m \\ 1\\ m\end{array}\right)$, $m =1,2,3$. It is easy to check that the set of these three vectors is locally complementary with respect to $\{P_1, P_2\}$ but the set $\mathcal E$ { in \eqref {vectors31}} generated by them has only six vectors.  Therefore, by Corollary \ref {loccompreal},  $\mathcal E$ does not have the complementary property in $\R^4$ and hence does not do phaseless reconstruction.
Thus, local complementarity alone is not sufficient for phaseless reconstruction.
\end{remark}

\begin {remark} \label {positiveEV}
If all the eigenvalues $\lambda_j$, $j = 1,\ldots, d$ of the matrix $A$ in Theorem \ref {GenRealMat} are strictly positive, then the matrix  $\Lambda= (\lambda_j^\ell)$, 
 $j = 1,\ldots, d$, $\ell = 0,\ldots, 2d-2$, has totally full spark \cite{JP05}, and hence for this case local complementarity is sufficient {and necessary by Proposition \ref{NecConLC}} for the set \eqref {vectors31} to do phaseless reconstruction. 

\end{remark}

\begin{example}(Sampling at one node).
Assume $A\in \mathbb{R}^{n\times n}$ is similar to a diagonal matrix $D$ with $n$ distinct positive eigenvalues.  Clearly, we need at least one sampling sensor $\phi$ to do phaseless reconstruction.  Say $A^T=B^{-1}DB$, and let $\phi=e_i$. Then, as long as $Be_i$ is entrywise nonzero, it suffices to choose the sampling set $\mathscr I=\{i\}$ and take unsigned samples at time levels $t=0, \ldots 2n-1$,  to  recover $f$ up to a sign. %from these unsigned spatiotemporal samples. 
\end{example}

An important case of an evolution process that is frequently encountered in practice is the so-called spatially invariant evolution process. In this case, the evolution operator $A$ is a circular convolution matrix  defined by a convolution kernel $a$.  The problem of { finding necessary and sufficient conditions on $A$, $L_i$ and $\mathscr  I\subset \{1,\dots,n\}$ for recovering  an unknown function $x$ from the dynamical samples  $Y=\{ x(i), \ldots,A^{L_i}x(i)  : i\in \mathscr I\}$  was studied in \cite{ADK13, ADK15}.} The following corollary uses the characterization obtained in \cite{ADK13}; %Now assume $n\ge 3$ is odd, let $a \in \mathbb{R}^n$ be a positive real symmetric convolution kernel such  that $\hat a$ is strictly decreasing on $\{0,1,\cdots,\frac{n-1}{2}\}$, and consider $Ax=a*x$.  In this special case, 
it allows us to completely characterize all initial sampling sets $\mathscr I$ with $|\mathscr I|=3$ (minimal cardinality possible in this setting).
%, which is a minimal requirement of cardinality for $\mathscr I$. 

\begin{corollary} \label{posconv}
Assume $n\ge 3$ is odd,  $a \in \mathbb{R}^n$ is a  real symmetric convolution kernel such  that its discrete Fourier transform $\hat a$ is positive  and strictly decreasing on $\{0,1,\cdots,\frac{n-1}{2}\}$, and $Ax=a*x$. Assume also that $\mathscr I=\{i_1, i_2, i_3\} \subset \Z_{n}$. 
Then any signal $f \in \mathbb{R}^n$ can be uniquely determined up to a sign from the unsigned spatiotemporal samples 
\begin{equation}\label{spatiotemporal}
Y=\{ \lvert f(i) \rvert, \lvert Af(i)\rvert, \cdots, \lvert A^{n}f(i) \rvert : i\in \mathscr I\}
\end{equation} 
if and only if  $ \lvert i_k-i_j \rvert$ and $n$ are co-prime for distinct $j,k=1,2,3$.
\end{corollary}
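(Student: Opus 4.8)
The plan is to diagonalize the circulant operator with the discrete Fourier transform and to reduce the two implications to Theorem~\ref{GenRealMat} (for sufficiency) and to Theorem~\ref{spanning} applied inside the doubly-degenerate eigenspaces (for necessity). First I would record the spectral data. Because $a$ is real and symmetric, $A$ is a symmetric circulant, so $A^T=A=F^{-1}DF$ with $F$ the (unitary) DFT matrix and $D=\mathrm{diag}(\hat a(0),\dots,\hat a(n-1))$. Symmetry forces $\hat a(\ell)=\hat a(n-\ell)$, while positivity and strict monotonicity of $\hat a$ on $\{0,\dots,\frac{n-1}{2}\}$ show that the distinct eigenvalues are exactly $\hat a(0)>\hat a(1)>\cdots>\hat a(\frac{n-1}{2})>0$. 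Thus $d=\frac{n+1}{2}$, the value $\hat a(0)$ is simple, and each $\hat a(m)$ with $1\le m\le\frac{n-1}{2}$ has multiplicity two, its eigenspace $E_m$ being spanned by the Fourier coordinates $m$ and $n-m$. Since all eigenvalues are positive, Remark~\ref{positiveEV} gives that $\Lambda$ has totally full spark, so the only hypothesis of Theorem~\ref{GenRealMat} left to check is local complementarity of $\Psi=\{\psi_i=Fe_i:i\in\mathscr I\}$ (here $\phi_i=e_i$ and $B=F$) with respect to the penthouse family $\mathscr P_D$.

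The heart of the matter is the local-complementarity computation. Let $\omega$ be a primitive $n$th root of unity; then $\psi_i=Fe_i$ has $m$th Fourier coordinate $\omega^{im}$, so the penthouse projection onto $E_m$ sends $\psi_i$ to the pair $(\omega^{im},\omega^{-im})$, while $P_0\psi_i=1\neq0$ for every $i$. For two nodes $i,i'$ the determinant of $P_m\psi_i$ and $P_m\psi_{i'}$ equals $\omega^{(i-i')m}-\omega^{-(i-i')m}$, which vanishes precisely when $n\mid(i-i')m$; hence these projected vectors are independent iff $n\nmid(i-i')m$, and the smallest positive $m$ failing this is $n/\gcd(|i-i'|,n)$. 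Because each $E_m$ with $m\ge1$ is two-dimensional, a singleton can never span it, so running over all partitions of the three-element set $\mathscr I$ shows that local complementarity is equivalent to requiring that every two-element subset $\{i,i'\}$ span every $E_m$; by the previous line this holds iff $n/\gcd(|i-i'|,n)>\frac{n-1}{2}$. Since $n$ is odd, every nontrivial divisor of $n$ is at least $3$, and an elementary check gives $n/\gcd(|i-i'|,n)>\frac{n-1}{2}$ exactly when $\gcd(|i-i'|,n)=1$. Therefore local complementarity of $\Psi$ holds iff $|i_j-i_k|$ is coprime to $n$ for all distinct $j,k$.

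For the ``if'' direction I would then simply invoke Theorem~\ref{GenRealMat}: coprimality yields local complementarity, totally full spark is already in hand, and so the set $\mathcal E$ with time levels $l=0,\dots,2r_i-2$ does phaseless reconstruction on $\R^n$. A one-line computation shows $r_i=r^D_{\psi_i}=d=\frac{n+1}{2}$ for every $i$, since each $\psi_i$ has a nonzero component in every eigenspace; thus $2r_i-2=n-1$. As the corollary samples the strictly larger family of levels $l=0,\dots,n$, and since enlarging a set that has the complement property keeps that property, phaseless reconstruction holds a fortiori for the samples in~\eqref{spatiotemporal}.

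For the ``only if'' direction I prefer a direct eigenspace argument, because Proposition~\ref{NecConLC} is stated for the smaller level set $l\le 2r_i-2$ and deleting the extra sample could in principle destroy reconstruction. Suppose $\gcd(|i_a-i_b|,n)=e>1$ for some pair and set $m=n/e\in\{1,\dots,\frac{n-1}{2}\}$. For $x\in E_m$ the measurements reduce to $|x(i)|=|\la x,P_{E_m}e_i\ra|$, so recovering such $x$ up to sign from $\{|x(i)|:i\in\mathscr I\}$ is phaseless reconstruction in the two-dimensional space $E_m$ with measurement vectors $\{P_{E_m}e_i\}$. The choice $m=n/e$ makes $P_{E_m}e_{i_a}$ and $P_{E_m}e_{i_b}$ linearly dependent, so this three-vector system fails the complement property in $E_m$; by Theorem~\ref{spanning} there exist $x,y\in E_m$ with $x\neq\pm y$ and $|x(i)|=|y(i)|$ for all $i\in\mathscr I$. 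Finally, $x$ and $y$ are genuine eigenvectors with eigenvalue $\hat a(m)>0$, so $|\la A^lx,e_i\ra|=\hat a(m)^l|x(i)|=\hat a(m)^l|y(i)|=|\la A^ly,e_i\ra|$ at every level $l$ and every $i\in\mathscr I$, and the ambiguity survives for all time levels. The main obstacle I anticipate is precisely this level-count bookkeeping, which is why the eigenspace reduction above is cleaner than a direct appeal to Proposition~\ref{NecConLC}; the only other delicate points are the short number-theoretic step using the oddness of $n$ and the recognition that one and the same sine determinant governs both the spanning condition and the construction of the ambiguous pair.
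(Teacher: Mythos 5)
Your proposal is correct, and for the sufficiency direction it is essentially the paper's own argument: diagonalize the symmetric circulant by the DFT, observe that $\hat a(0)$ is a simple eigenvalue and each $\hat a(m)$, $1\le m\le\frac{n-1}{2}$, is doubly degenerate with eigenspace spanned by the Fourier coordinates $m$ and $n-m$, compute the same $2\times 2$ determinant $\omega^{(i-i')m}-\omega^{-(i-i')m}$ (the paper phrases this as a spark computation, $spark\{P_mb_i\}=3$ iff the pairwise differences are coprime to $n$), note $r_i=\frac{n+1}{2}$ because $Fe_i$ has no zero entries, and invoke Theorem~\ref{GenRealMat} together with Remark~\ref{positiveEV} since all eigenvalues are positive. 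Where you genuinely diverge is the ``only if'' direction, and your instinct there is sound: the paper disposes of necessity by citing Remark~\ref{positiveEV}, i.e.\ Proposition~\ref{NecConLC}, but that proposition is stated for the measurement set with levels $l=0,\dots,2r_i-2=n-1$, whereas \eqref{spatiotemporal} includes the extra level $l=n$, so failure of local complementarity does not \emph{literally} yield failure of reconstruction from the larger sample set via that citation alone. Your replacement --- when $\gcd(|i_a-i_b|,n)=e>1$, take $m=n/e\le\frac{n-1}{2}$, exhibit via Theorem~\ref{spanning} a pair $x\ne\pm y$ in the two-dimensional real eigenspace $E_m$ with $|x(i)|=|y(i)|$ on $\mathscr I$, and observe that since $x,y$ are eigenvectors with eigenvalue $\hat a(m)>0$ the ambiguity $|\la A^lx,e_i\ra|=|\la A^ly,e_i\ra|$ persists for \emph{every} time level --- closes this gap cleanly and is strictly more robust than the paper's route (it also immediately shows no amount of extra time sampling can help). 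Your companion observation, that sufficiency survives the enlargement from level $n-1$ to level $n$ because the complement property is monotone under adding vectors, is likewise a bookkeeping point the paper leaves implicit. Both arguments ultimately rest on the same sine determinant and the same use of oddness of $n$ (every nontrivial divisor is at least $3$, so $n/e\le\frac{n-1}{2}$); yours is simply more careful about the time-level count, and Corollary~\ref{loccompreal} plays the same role in both in explaining why $|\mathscr I|=3$ is minimal.
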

\subsection{Infinite dimensional case}
In this section, we consider the problem of phaseless reconstruction from iterations of self-adjoint  or (more generally) diagonalizable reductive operators in a separable, infinite dimensional Hilbert space. 
%
%\subsection {Diagonilizable reductive operators}
%We consider a class of operators on a Hilbert space that are both diagonalizable and reductive. 
As we mentioned in the introduction, a {\em reductive}  operator $A$ on a Hilbert space is  such that if $V$ is an invariant subspace for $A$, then $V$ is also invariant for $A^\ast$.  In particular, every self-adjoint (but not every normal) operator is reductive \cite{conway}. %However, not every normal operator is. 

\begin{theorem}\label{GenRealMatInf}
Let $A$  be a reductive, diagonalizable operator on a real separable Hilbert space $\HH$. Assume that $A^T=B^{-1}DB$, where $B$ is a bounded operator  from $\HH$ onto $\ell^2(\N)$, and  $D$ is a diagonal operator on $\ell^2(\N)$ with the spectral decomposition $D =\sum_{j\in \Delta}\lambda_jP_j$.  Let $\Psi = \{\psi_i=B\phi_i: i\in\mathscr I\}$ be a set of vectors in $\ell^2(\N)$,  and $r_i =r^D_{\psi_i}$.  Assume that $r_{\max}=\sup \{r_i: i \in \mathscr I\}<\infty$, and let $\Lambda= (\lambda_j^\ell)_{0\le l\le 2r_{\max-2},\ j \in \Delta}$. Assume also that the set $\Psi $  is {locally complementary} with respect to $\mathscr P_D$,  and every $k\times k$ sub-matrix  of $\Lambda$ with $k\le r_{\max}$ is nonsingular. Then the set of vectors 
\begin{equation}\label{vectors3}
\mathcal E = \{\phi_i, (A^T)\phi_i,\cdots,(A^T)^{2r_i-2}\phi_i: i\in \mathscr I\}
\end{equation}
does phaseless reconstruction in $\HH$.  In other words, any $f\in\HH$ can be uniquely determined up to a sign from its unsigned measurements $$Y=\{\lvert \la A^lf, \phi_i\ra \rvert:  i \in \mathscr I,\; l=0,\dots, 2r_i-2\}.$$
\end{theorem}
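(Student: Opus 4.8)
The plan is to follow the blueprint of the finite-dimensional Theorem \ref{GenRealMatJorD}: establish that $\mathcal E$ has the complement property in $\HH$ and then invoke Theorem \ref{spanning}. Since everything in $\mathcal E$ is real, Theorem \ref{spanning} applies directly, so it suffices to show that for every disjoint partition $\mathcal E = \mathcal E_1 \cup \mathcal E_2$ one of $\overline{\mathrm{span}}\,\mathcal E_1$, $\overline{\mathrm{span}}\,\mathcal E_2$ equals $\HH$. The conjugation $A^T = B^{-1}DB$ turns iterations of $A^T$ into iterations of $D$, namely $B(A^T)^l\phi_i = D^l\psi_i$, and since $B$ is a bounded isomorphism it carries closures of spans to closures of spans; hence I will work throughout with the transferred system $B\mathcal E = \{D^l\psi_i\}$ inside $\ell^2(\N)$. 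The genuinely new difficulties compared with the matrix case are the passage to closures and the presence of infinitely many eigenspaces.

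First I would record the infinite-dimensional replacement for Proposition \ref{krylovspan}. Because $r_i = r^D_{\psi_i} \le r_{\max} < \infty$, the vector $\psi_i$ is supported on exactly $r_i$ eigenspaces, say $\Delta_i = \{j : P_j\psi_i \neq 0\}$ with $|\Delta_i| = r_i$, and $\{P_j\psi_i : j\in\Delta_i\}$ is a basis of $\mathcal K_\infty(D,\psi_i)$. For any $r_i$ exponents $l_1 < \dots < l_{r_i}$ in $\{0,\dots,2r_i-2\}$, the coordinates of $D^{l_s}\psi_i$ in this basis form the matrix $(\lambda_j^{l_s})_{s,\,j\in\Delta_i}$, which is an $r_i\times r_i$ submatrix of $\Lambda$ with $r_i\le r_{\max}$, hence nonsingular by hypothesis. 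Consequently any $r_i$ of the vectors $D^l\psi_i$, $0\le l\le 2r_i-2$, span $\mathcal K_\infty(D,\psi_i)$; in particular each $P_j\psi_i$, $j\in\Delta_i$, lies in their span.

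Next, given a disjoint partition $\mathcal E = \mathcal E_1 \cup \mathcal E_2$, I would manufacture a partition of the index set $\mathscr I$. For each $i$ the $2r_i-1$ iterates $\{(A^T)^l\phi_i\}$ are distributed between $\mathcal E_1$ and $\mathcal E_2$, so by pigeonhole one side receives at least $r_i$ of them; assign $i$ to that side, producing $\{\mathscr I_1,\mathscr I_2\}$. By the local complementarity of $\Psi$ with respect to $\mathscr P_D$ (Definition \ref{LC}) there is $\ell^*\in\{1,2\}$ with $\{P_j\psi_i : i\in\mathscr I_{\ell^*}\}$ spanning $E_j$ for every $j\in\Delta$. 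For each such $i$ the block $\mathcal E_{\ell^*}$ contains at least $r_i$ iterates of $\phi_i$ by construction, so the previous step gives $P_j\psi_i\in\overline{\mathrm{span}}\,(B\mathcal E_{\ell^*})$ for all $j\in\Delta_i$. Letting $i$ range over $\mathscr I_{\ell^*}$ and invoking local complementarity, $\overline{\mathrm{span}}\,(B\mathcal E_{\ell^*})$ contains every eigenspace $E_j$; as $D$ is diagonalizable its eigenspaces are total in $\ell^2(\N)$, so $\overline{\mathrm{span}}\,(B\mathcal E_{\ell^*}) = \ell^2(\N)$, and applying $B^{-1}$ yields $\overline{\mathrm{span}}\,\mathcal E_{\ell^*} = \HH$. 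Thus $\mathcal E$ has the complement property and Theorem \ref{spanning} completes the proof.

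The step I expect to require the most care — and where reductivity and the real/complex interplay enter — is the passage to closures just described. Since $\HH$ is real while $D$, the $\lambda_j$, and the $P_j$ are complex, I would argue the final density contrapositively: a vector $g\in\HH$ orthogonal to $\mathcal E_{\ell^*}$ transfers to $u=(B^*)^{-1}g$ with $\langle u, D^l\psi_i\rangle = 0$ for the $\ge r_i$ admissible exponents, and a Vandermonde argument using the nonsingular submatrices of the conjugate of $\Lambda$ then forces $\langle u, P_j\psi_i\rangle = 0$ for all $j\in\Delta_i$. Here reductivity of $A$ is what guarantees that the eigenspaces are mutually orthogonal and total, so that local complementarity upgrades ``$u\perp P_j\psi_i$ for all $i\in\mathscr I_{\ell^*}$'' to ``$u\perp E_j$ for all $j$'' and hence $u=0$, i.e.\ $g=0$. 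Checking that this orthogonality bookkeeping is compatible with the real structure of $\HH$ (using that the spectrum and eigenspaces occur in conjugate pairs) and that $r_{\max}<\infty$ keeps every relevant submatrix of $\Lambda$ within the hypothesized size range is the technical heart of the argument.
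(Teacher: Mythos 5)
Your proposal is correct and follows essentially the same route as the paper: the same pigeonhole partition of $\mathscr I$ into $\mathscr I_1,\mathscr I_2$ according to whether at least $r_i$ of the iterates of $\psi_i$ fall in a given block, the same appeal to local complementarity, the same nonsingular-submatrix (Vandermonde-type) argument showing that any $r_i$ of the vectors $D^l\psi_i$, $0\le l\le 2r_i-2$, span the Krylov subspace $\mathcal K_\infty(D,\psi_i)$, and the same transfer back to $\HH$ via $B$, the real/complex complement-property equivalence, and Theorem \ref{spanning}. The only deviation is minor: where the paper cites Theorem \ref{complete} of \cite{ACMT14} to conclude completeness of $\{D^l\psi_i:\ i\in\mathscr I_1,\ 0\le l\le r_i-1\}$, you prove that step inline (each $E_j$ lies in the closed span because the nonzero projections $P_j\psi_i$ do, and the eigenspaces of the diagonal operator $D$ are total), which amounts to the easy direction of that theorem, while your closing contrapositive paragraph merely re-derives a density you have already established and slightly misplaces the role of reductivity, which in the paper enters only through the cited machinery of \cite{ACMT14}.
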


As a corollary, we get the following theorem for strictly positive operators.
%which does not require reducibility of $A$  since the operator  is assumed self-adjoint, and does make assumptions on   $\Lambda$ or that $r_{\max}<\infty$ since $A$ is assumed to be strictly positive.  

\begin{theorem}\label{GenRealMatInfPos}
Let $A$  be a strictly positive operator on a real separable Hilbert space $\HH$. Assume that $A^T=U^{\ast}DU$, where $U$ is a unitary operator  from $\HH$ onto $\ell^2(\N)$, and  $D$ is a diagonal operator on $\ell^2(\N)$ with the spectral decomposition $D =\sum_{j\in \Delta}\lambda_jP_j$.  Let $\Psi = \{\psi_i=U\phi_i: i\in\mathscr I\}$ be a set of vectors in $\ell^2(\N)$,  $l_i =2r^D_{\psi_i}-1$ if $r^D_{\psi_i}<\infty$ and $l_i=\infty$ otherwise.  If the set $\Psi $  is {locally complementary} with respect to $\mathscr P_D$ then the set of vectors 
\begin{equation}\label{vectors4}
\mathcal E = \{\phi_i, (A^T)\phi_i,\cdots,(A^T)^{2r_i-2}\phi_i: i\in \mathscr I\}
\end{equation}
does phaseless reconstruction in $\HH$.  In other words, any $f\in\HH$ can be uniquely determined up to a sign from its unsigned measurements $$Y=\{\lvert \la A^lf, \phi_i\ra \rvert:  i \in \mathscr I,\; 0\le l < l_i\}.$$
\end{theorem}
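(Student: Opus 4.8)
The plan is to derive this theorem as a corollary of Theorem \ref{GenRealMatInf}, since a strictly positive operator is a special case of a reductive, diagonalizable operator with additional structure on its spectrum. First I would observe that a strictly positive operator $A$ is self-adjoint and hence reductive, and that the unitary diagonalization $A^T = U^\ast D U$ is a special case of the hypothesis $A^T = B^{-1} D B$ with $B = U$ bounded and invertible. The eigenvalues $\lambda_j$ appearing in the spectral decomposition $D = \sum_{j\in\Delta}\lambda_j P_j$ are therefore all strictly positive. The key point to extract from positivity is that the matrix $\Lambda = (\lambda_j^\ell)$ built from powers of these eigenvalues automatically has the property that every $k\times k$ submatrix is nonsingular: this is precisely the generalized Vandermonde / totally-positive-matrix fact invoked in Remark \ref{positiveEV} and cited to \cite{JP05}, and it holds even for infinitely many distinct positive nodes because every finite submatrix is a generalized Vandermonde determinant in distinct positive numbers.

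The main subtlety, and where the statement genuinely differs from the finite-dimensional Theorem \ref{GenRealMat}, is that here we do \emph{not} assume $r_{\max} = \sup\{r_i : i\in\mathscr I\}$ is finite; instead the number of iterations is allowed to be $l_i = \infty$ exactly when $r^D_{\psi_i} = \infty$. So I cannot apply Theorem \ref{GenRealMatInf} verbatim. The strategy would be to reduce to the finite case by the following truncation argument. Suppose $f, g \in \HH$ produce identical unsigned measurements $|\la A^l f,\phi_i\ra| = |\la A^l g,\phi_i\ra|$ for all admissible $l$ and $i$. For each fixed $i$, the vanishing of the phase ambiguity on the measurements at node $i$ is controlled by the Krylov subspace $\mathcal K_\infty(A, \phi_i)$, whose dimension equals $r^D_{\psi_i}$ by the discussion following Proposition \ref{krylovspan}. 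When $r^D_{\psi_i} = r_i < \infty$, the finitely many iterations $l = 0,\dots, 2r_i - 2$ already exhaust the information, matching the range $0 \le l < l_i = 2r_i - 1$; when $r^D_{\psi_i} = \infty$ we are given \emph{all} iterations $l\ge 0$, so any finite truncation $0\le l \le 2k-2$ for arbitrarily large $k$ is available. Thus for any prescribed finite bound $R$, the measurements include all of $\{|\la A^l f,\phi_i\ra| : 0\le l\le \min(2r_i-2, R)\}$, which is exactly the data that a finite-level application of the reasoning behind Theorem \ref{GenRealMatInf} consumes.

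Concretely, I would run the proof of Theorem \ref{GenRealMatInf} with $r_{\max}$ replaced by a finite cutoff and then let the cutoff tend to infinity. The complement-property / local-complementarity machinery (Definition \ref{LC}, Proposition \ref{locness}, Theorem \ref{spanning}) is exploited to conclude that on each penthouse component $E_j = \mathrm{range}(P_j)$ the projected, iterated sampling vectors span $E_j$ for one element of every partition; iteration regularity — supplied here for free by the strict positivity of the spectrum, via Proposition \ref{iterregD} and the totally-full-spark property of $\Lambda$ — guarantees that the requisite Krylov vectors are linearly independent by Proposition \ref{krylovspan}. I expect the main obstacle to be the passage to the limit in the case $r^D_{\psi_i} = \infty$: one must argue that the local-complementarity condition, together with the span conditions holding at every finite level, forces the sign relation $f = \pm g$ to be consistent across all nodes simultaneously, i.e.\ that the global phase cannot ``split'' between different components of $\Delta$. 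This is handled exactly as in the finite case by the partition $\mathscr I = \mathscr I_1 \cup \mathscr I_2$ with $\mathscr I_1 = \{f : \la f,\cdot\ra = \la g,\cdot\ra\}$; the fact that local complementarity yields a single $\ell\in\{1,2\}$ working for \emph{every} $j\in\Delta$ is precisely what rules out inconsistent signs, and strict positivity ensures this reasoning survives the unbounded-$l_i$ regime since no finite restriction on the number of iterations is ever imposed where $r^D_{\psi_i}=\infty$.
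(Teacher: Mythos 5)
There is a genuine gap, and it is exactly at the point you yourself flag as the main obstacle: the indices $i$ with $r^D_{\psi_i}=\infty$. Your truncation-and-limit scheme cannot work, for a structural reason. By Theorem \ref{spanning}, phaseless reconstruction is equivalent to the complement property, which quantifies over \emph{arbitrary} partitions $\mathcal F_1\cup\mathcal F_2$ of the measurement set. When $r_i=\infty$, the (infinite) orbit $\{D^\ell\psi_i:\ell\ge 0\}$ may be split between $\mathcal F_1$ and $\mathcal F_2$ in a completely arbitrary way --- say even exponents to one side and odd exponents to the other --- so neither side contains any initial segment $\{0,\dots,R\}$. Consequently, at every finite cutoff $R$ the truncated configuration never satisfies the hypotheses of Theorem \ref{GenRealMatInf}: that theorem needs the \emph{actual} annihilator degrees with $r_{\max}<\infty$, whereas for $r_i=\infty$ no finite collection $\{D^\ell\psi_i:\ell\le R\}$ can be complete in the closed orbit span, and one cannot ``declare'' $r_i$ to be $R$. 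The limit is also in the wrong logical direction: if some finite truncation already did phaseless reconstruction you would not need a limit, and if none does, letting $R\to\infty$ yields nothing --- spanning of closed subspaces is not preserved by this kind of exhaustion, and your appeal to the sign-partition $\mathcal F_1=\{f:\la x,f\ra=\la y,f\ra\}$ merely restates the problem of handling an arbitrary split of the exponent set.

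The missing idea --- and the paper's actual route, advertised in its very keywords --- is the M\"untz--Sz\'asz theorem. For each $i$ with $r_i=\infty$, the partition gives exponent sets with $\Gamma_i^1\cup\Gamma_i^2=\{0,1,2,\dots\}$; since the harmonic series diverges, at least one $\ell\in\{1,2\}$ satisfies $\sum_{j\in\Gamma_i^\ell\setminus\{0\}}1/j=\infty$, and because strict positivity places the spectrum in an interval $[a,b]$ with $a>0$, M\"untz--Sz\'asz (Theorem \ref{MS}(2), as in the proof of Theorem 3.6 of \cite{ACMT14}) yields $\overline{span}\{D^j\psi_i:j\in\Gamma_i^\ell\}=\overline{span}\{D^j\psi_i:j\ge 0\}$. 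The paper then defines $\I_\ell$ by mixing the counting condition $|\Gamma_i^\ell|\ge r_i$ (for $r_i<\infty$, where your Vandermonde/positivity observation via \cite{JP05} is correct and matches the paper) with this divergence condition (for $r_i=\infty$), notes the subtlety that $\I_1,\I_2$ is then only a \emph{cover} of $\I$, not a partition --- which still suffices for local complementarity since spanning is monotone in the index set --- and concludes via Theorem \ref{complete}, Lemma \ref{invertiblecomp}, and Theorem \ref{spanning}. Without the M\"untz--Sz\'asz step your argument cannot recover the full orbit span from an adversarially chosen half of the exponents, so the proposal as written does not prove the theorem.
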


\section {Proofs for Section \ref {Main}}\label{mainproof}

In order to prove our main results, we need the following theorems from \cite{ACMT14}.
\begin {theorem}[\cite{ACMT14}]
\label {tadcul}
Let $J\in \CC^{n\times n}$ be a matrix in Jordan form  as in  \eqref{Jordan1}. 
Let   $\{b_i: i \in \mathscr I\}\subset \CC^n$ be a finite subset of vectors, $r_i$ be the degree of  the $(J, b_i)$-annihilator, %of the vector $b_i$ 
  $l_i=r_i-1$, and $\mathscr P_J = \{P_s: s = 1, \dots, d\}$ be the penthouse family for $J$ introduced in Definition \ref {defJord}. 

Then the following statements are equivalent:
\begin{enumerate}

\item[(i)]
The set of vectors $\{J^jb_i: \; i \in \mathscr I, j=0,\dots, l_i\}$ is a frame for $\CC^n$.
\item[(ii)]
For every $s = 1, \dots, d$, the set $\{P_s b_i, i \in \mathscr I\}$ forms a frame for $E_s = P_s\CC^n$.\end{enumerate}
\end{theorem}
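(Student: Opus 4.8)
The plan is to use that in finite dimensions a family is a frame for a subspace precisely when it spans that subspace, so statement (i) asserts that $\{J^jb_i: i\in\mathscr I,\ j=0,\dots,l_i\}$ spans $\CC^n$, while (ii) asserts that $\{P_sb_i:i\in\mathscr I\}$ spans $E_s$ for each $s$. Since $l_i=r_i-1$ and $r_i$ is the degree of the $(J,b_i)$-annihilator (Definition \ref{minpol}), the vectors $b_i,Jb_i,\dots,J^{r_i-1}b_i$ form a basis of the maximal Krylov subspace $\mathcal{K}_\infty(J,b_i)$; hence the span in (i) is exactly $\sum_{i\in\mathscr I}\mathcal{K}_\infty(J,b_i)$, and the task becomes to characterize when this sum equals $\CC^n$.

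First I would decouple the eigenvalues. Writing $\CC^n=\bigoplus_{s=1}^d W_s$ for the decomposition into the generalized eigenspaces of the distinct $\lambda_s$, each spectral projection $\pi_s\colon\CC^n\to W_s$ is a polynomial in $J$ (by the Chinese remainder theorem applied to the factors of the minimal polynomial). Consequently $\pi_s$ maps each Krylov subspace into itself and $\mathcal{K}_\infty(J,b_i)=\bigoplus_s\mathcal{K}_\infty(J_s,\pi_sb_i)$, where $J_s=J|_{W_s}$. Because the $W_s$ are independent, this yields
$$\sum_{i}\mathcal{K}_\infty(J,b_i)=\bigoplus_{s=1}^d\Big(\sum_i\mathcal{K}_\infty(J_s,\pi_sb_i)\Big),$$
so (i) is equivalent to the per-eigenvalue conditions $\sum_i\mathcal{K}_\infty(J_s,\pi_sb_i)=W_s$ for every $s$. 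Since $J_s=\lambda_sI+M_s$ with $M_s$ nilpotent, and powers of $J_s$ and of $M_s$ generate the same Krylov subspaces, this reduces to the nilpotent statement $\sum_i\mathcal{K}_\infty(M_s,v_i)=W_s$ with $v_i:=\pi_sb_i$; note also that $P_sb_i=P_sv_i$, so (ii) becomes $\{P_sv_i\}$ spans $E_s$.

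The crux is the nilpotent case, which I would handle through the quotient $q\colon W_s\to W_s/\operatorname{range}(M_s)$. By the form of the penthouse family (Definition \ref{defJord}), $\operatorname{range}(M_s)$ is spanned by all the non-generator basis vectors of the Jordan blocks, so $q$ identifies $W_s/\operatorname{range}(M_s)$ with $E_s$ and sends $v_i$ to $P_sv_i$. Since $q(M_s^m v_i)=0$ for $m\ge 1$, we get $q\big(\sum_i\mathcal{K}_\infty(M_s,v_i)\big)=\operatorname{span}\{P_sv_i\}$, which already gives the easy direction: if the sum is all of $W_s$ then $\{P_sv_i\}$ spans $E_s$. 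For the converse, set $U=\sum_i\mathcal{K}_\infty(M_s,v_i)$, an $M_s$-invariant subspace with $U+\operatorname{range}(M_s)=W_s$. A Nakayama-type iteration—multiplying $U+M_sW_s=W_s$ by $M_s$ and using $M_sU\subseteq U$ to obtain $U+M_s^{k}W_s=W_s$ for all $k$, then invoking nilpotence $M_s^{k}=0$—forces $U=W_s$.

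The main obstacle I anticipate is precisely this nilpotent step, because when several Jordan blocks for the same eigenvalue share the same size there is genuine freedom to mix them, and a naive block-by-block argument breaks down. The virtue of passing to the quotient by $\operatorname{range}(M_s)$ is that it sidesteps this entirely: invariance together with surjectivity onto the top quotient, combined with nilpotence, recovers all of $W_s$ regardless of the block-size pattern. The remaining work is bookkeeping—checking that the spectral projections are polynomials in $J$, and that the penthouse projection $P_s$ corresponds to the quotient map $q$ under the natural identification of $E_s$ with $W_s/\operatorname{range}(M_s)$.
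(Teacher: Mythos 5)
Your proof is correct, and it is worth noting that the paper itself offers nothing to compare it against: Theorem \ref{tadcul} is imported from \cite{ACMT14} and stated without proof, so your argument is a self-contained substitute for the external reference rather than a variant of an in-paper proof. All the load-bearing steps check out. The reduction of (i) to the per-eigenvalue conditions is sound because the spectral projections $\pi_s$ are polynomials in $J$, which gives both $\mathcal K_\infty(J,b_i)=\bigoplus_s \mathcal K_\infty(J_s,\pi_s b_i)$ and the regrouping $\sum_i \mathcal K_\infty(J,b_i)=\bigoplus_s\bigl(\sum_i \mathcal K_\infty(J_s,\pi_s b_i)\bigr)$; the passage from $J_s$ to $M_s$ is a triangular change of generators and harmless. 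The identification underlying your quotient step is genuinely true with the paper's conventions: with the lower-shift normalization \eqref{Nil2} one has $\operatorname{range}(N_t)=\operatorname{span}\{e_2,\dots,e_t\}$ inside each cyclic block, while the penthouse vectors $e_{k_j^s}$ of Definition \ref{defJord} are exactly the first-row (cyclic) vectors, so $W_s=E_s\oplus\operatorname{range}(M_s)$, the quotient map $q$ is $P_s$ restricted to $W_s$, and $P_sb_i=P_s\pi_sb_i$ because in Jordan coordinates $\pi_s$ is a coordinate projection with $E_s\subseteq W_s$. Your Nakayama-type iteration ($U+M_sW_s=W_s$ with $M_sU\subseteq U$ forces $U+M_s^kW_s=W_s$, hence $U=W_s$ by nilpotence) is the right tool for the converse, and your diagnosis of why it is needed is accurate: when several Jordan blocks of one eigenvalue have equal sizes, generators can mix the blocks, and any argument that tries to match Krylov subspaces to individual blocks breaks down, whereas surjectivity onto the top quotient plus invariance is insensitive to the block-size pattern. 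This quotient formulation buys a uniform, basis-free proof of precisely the point that makes the theorem nontrivial, and it also makes transparent why only the penthouse projections $P_s$ (and not finer data about block sizes) appear in condition (ii).
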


For the case of a diagonal matrix in $\ell^2(\N)$ we have %the following theorem

\begin {theorem}[\cite{ACMT14}] \label {complete}
Let $D=\sum_{j\in \Delta}\lambda_jP_j$ be  a reductive diagonal operator on $\ell^2(\N)$ such that $\mathscr P_D = \{P_j:j \in \Delta\}$ is a family of projections in $\ell^2(\N)$ which forms a resolution of the identity.  Let   $\{b_i: i \in \mathscr I\}$ be a countable subset of vectors of $\ell^2(\N)$, $r_i$ be the degree of  the $(D, b_i)$-annihilator {\color {green} (see Definition \ref {minpol})}, and  $l_i=r_i-1$. 
Then the set $\big\{A^lb_i:\; i \in \mathscr I,  l=0,\dots, l_i\big\}$ is complete in $\ell^2(\N)$ if and only if for each $j\in\Delta$, the set $\big\{P_jb_i: i  \in \mathscr I\big\}$ is complete on the range $E_j$ of $P_j$. 
\end{theorem}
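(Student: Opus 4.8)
The plan is to reduce the equivalence to the orthogonal geometry of the eigenspace decomposition together with a Vandermonde--moment argument. Write $E_j=P_j\ell^2(\N)$ for the range of $P_j$. Since $D$ is reductive and $\mathscr P_D$ is a resolution of the identity, the $P_j$ are mutually orthogonal self-adjoint projections, $\ell^2(\N)=\bigoplus_{j\in\Delta}E_j$ orthogonally, and $D=\sum_j\lambda_jP_j$ is normal with $D^lb_i=\sum_j\lambda_j^lP_jb_i$; moreover the degree $r_i$ of the $(D,b_i)$-annihilator equals the number of indices $j$ with $P_jb_i\neq0$. I set $V=\overline{\operatorname{span}}\{D^lb_i:i\in\mathscr I,\ 0\le l\le l_i\}$, $W_j=\overline{\operatorname{span}}\{P_jb_i:i\in\mathscr I\}\subseteq E_j$, and $U=\overline{\operatorname{span}}\{P_jb_i:i\in\mathscr I,\ j\in\Delta\}$, the latter being the closed span of $\bigcup_jW_j$. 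The claim is that $V=\ell^2(\N)$ if and only if $W_j=E_j$ for every $j\in\Delta$.

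First I would settle the purely Hilbert-space step. Because $W_j\subseteq E_j$, the $E_j$ are mutually orthogonal, and $\bigoplus_jE_j=\ell^2(\N)$, a vector $v=\sum_jP_jv$ is orthogonal to every $W_j$ exactly when $P_jv\in E_j\ominus W_j$ for each $j$; hence $U^\perp=\bigoplus_j(E_j\ominus W_j)$, so $U=\ell^2(\N)$ if and only if $W_j=E_j$ for all $j$. It therefore remains to prove the operator-theoretic identity $V=U$.

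The inclusion $V\subseteq U$ is immediate, since $D^lb_i=\sum_j\lambda_j^lP_jb_i$ is an $\ell^2$-limit of finite combinations of the $P_jb_i$. For the reverse inclusion I would pass to orthogonal complements and show that every $v\in V^\perp$ in fact lies in $U^\perp$, i.e.\ that $\langle P_jb_i,v\rangle=0$ for all $i,j$. Fixing $i$ and writing $a_{ij}=\langle P_jb_i,v\rangle=\langle P_jb_i,P_jv\rangle$, Cauchy--Schwarz gives $\sum_j|a_{ij}|\le\|b_i\|\,\|v\|<\infty$, so $\mu_i=\sum_ja_{ij}\delta_{\lambda_j}$ is a finite complex measure on the compact set $\sigma(D)$, and the conditions $v\perp V$ read $\int z^l\,d\mu_i=\sum_j\lambda_j^la_{ij}=\langle D^lb_i,v\rangle=0$ for $0\le l\le l_i$. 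When $r_i<\infty$ only $r_i$ of the weights $a_{ij}$ are nonzero, and these $r_i$ equations form a square Vandermonde system in the distinct nodes $\lambda_j$; its invertibility forces every $a_{ij}=0$.

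The main obstacle is the case $r_i=\infty$ (so $l_i=\infty$), where only the analytic moments $\int z^l\,d\mu_i=0$, $l\ge0$, are available, and these alone do not determine a measure whose nodes are spread through $\CC$. This is precisely where reductivity enters. The closed cyclic subspace $Z_i=\overline{\operatorname{span}}\{D^lb_i:l\ge0\}\subseteq V$ is $D$-invariant, hence, $D$ being reductive, it is also $D^*$-invariant; therefore $(D^*)^mD^lb_i\in Z_i\subseteq V$ for all $l,m\ge0$, and $v\perp V$ now yields $\int\bar z^{\,m}z^{\,l}\,d\mu_i=\sum_j\bar\lambda_j^{\,m}\lambda_j^{\,l}a_{ij}=0$ for all $l,m\ge0$. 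Since polynomials in $z$ and $\bar z$ are dense in $C(\sigma(D))$ by the Stone--Weierstrass theorem, it follows that $\mu_i=0$, so again all $a_{ij}=0$. In either case $v\perp P_jb_i$ for every $i,j$, whence $V^\perp=U^\perp$ and $V=U$; combined with the first step, this proves the equivalence. I expect the verification that $Z_i$ reduces $D$, together with the Stone--Weierstrass passage, to be the only genuinely delicate ingredient.
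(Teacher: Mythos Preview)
The paper does not contain its own proof of this statement: Theorem~\ref{complete} is quoted from \cite{ACMT14} as a black-box tool and used in the proofs of Theorems~\ref{GenRealMatInf} and~\ref{GenRealMatInfPos}, so there is no in-paper argument to compare your proposal against.

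That said, your argument is sound and self-contained. The reduction to $V=U$ via the orthogonal decomposition $\ell^2(\N)=\bigoplus_jE_j$ is clean, and both halves of the inclusion $U\subseteq V$ are handled correctly: for $r_i<\infty$ the square Vandermonde system in the $r_i$ distinct nodes $\{\lambda_j:P_jb_i\neq0\}$ forces all $a_{ij}=0$; for $r_i=\infty$ the key step is precisely the one you flag --- reductivity makes the cyclic subspace $Z_i=\overline{\operatorname{span}}\{D^lb_i:l\ge0\}$ also $D^*$-invariant, so the mixed moments $\int \bar z^{\,m}z^{\,l}\,d\mu_i$ all vanish, and Stone--Weierstrass on the compact set $\sigma(D)$ kills $\mu_i$. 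The absolute summability $\sum_j|a_{ij}|\le\|b_i\|\,\|v\|$ (via Cauchy--Schwarz on the orthogonal components) legitimizes $\mu_i$ as a finite complex Borel measure, and distinctness of the $\lambda_j$ ensures $\mu_i=0$ indeed implies every $a_{ij}=0$. Nothing is missing.

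One cosmetic point: the statement as printed uses $A^lb_i$, which in context must be $D^lb_i$; you silently corrected this, which is the right call.
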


{ \begin {remark}
Note that, from  Definition \ref {minpol}, $r_i$ and hence $l_i$ can be infinite.
\end {remark}
}

%In order to prove Theorems \ref {GenRealMatJorD}, % and \ref {GenRealMat}, 
We will also need the following lemma, which follows immediately from the definitions. 

\begin{lemma} \label {invertiblecomp} Let $B \in \mathbb {C} ^{n\times n}$ be an invertible matrix.  Then 

\begin {enumerate} 
\item The set $\mathcal{F}=\{f_1,\ldots,f_m\}$  does phaseless reconstruction in
$\mathbb K^n$ if and only if the set $B\mathcal{F}=\{Bf_1,\ldots, Bf_m\}$ does phaseless reconstruction in
$\mathbb K^n$, $\mathbb K\in\{\R, \CC\}$.

\item The set $\mathcal{F}=\{f_1,\ldots,f_m\}$  has complement property in $\mathbb K^n$ {\color{green} (see Definition \ref {CompProp})}  if and only if the set $B\mathcal{F}=\{Bf_1,\ldots, Bf_m\}$ also has the complement property in $\mathbb K^n$, $\mathbb K  \in\{\R, \CC\}$.

\item The set $\mathcal{F}=\{f_1,\ldots,f_m\}\subset \R^n$  has complement property in $\mathbb R^n$   if and only if it also has the complement property in $\mathbb C^n$.
\end {enumerate} 
\end{lemma}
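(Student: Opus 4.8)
The plan is to reduce all three statements to elementary facts about invertible linear maps and about the field-independence of rank, invoking only the definitions of phaseless reconstruction and the complement property together with Theorem \ref{spanning}. For part (1), the key identity is $\langle x, Bf_j\rangle = \langle B^\ast x, f_j\rangle$, valid for every $x\in\mathbb K^n$ and every $f_j\in\mathcal F$. Consequently the measurement map attached to $B\mathcal F$ factors as $T_{B\mathcal F} = T_{\mathcal F}\circ B^\ast$. Since $B$ is invertible, so is $B^\ast$, and because $B^\ast(cx)=cB^\ast x$ for every scalar $c$, the map $B^\ast$ descends to a bijection of the quotient $\mathbb K^n/\!\sim$ onto itself. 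First I would record this factorization, then observe that conjugating an injectivity statement on the quotient by the bijection $B^\ast$ is reversible in both directions; hence $T_{B\mathcal F}$ is injective on $\mathbb K^n/\!\sim$ if and only if $T_{\mathcal F}$ is, which is exactly the claim that $B\mathcal F$ does phaseless reconstruction if and only if $\mathcal F$ does. This argument handles $\mathbb K=\mathbb R$ and $\mathbb K=\mathbb C$ simultaneously, which matters because in the complex case one cannot appeal to Theorem \ref{spanning}.

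For part (2), I would use that for an invertible $B$ and any subset $S\subset\mathbb K^n$ one has $\mathrm{span}(BS)=B(\mathrm{span}\,S)$, and that a bijective linear map carries a spanning set to a spanning set; hence $\overline{\mathrm{span}}(BS)=\mathbb K^n$ if and only if $\overline{\mathrm{span}}\,S=\mathbb K^n$. Every disjoint partition $B\mathcal F=(B\mathcal F_1)\cup(B\mathcal F_2)$ is the image under $B$ of the corresponding partition $\mathcal F=\mathcal F_1\cup\mathcal F_2$, and by the preceding sentence $\overline{\mathrm{span}}(B\mathcal F_\ell)=\mathbb K^n$ exactly when $\overline{\mathrm{span}}\,\mathcal F_\ell=\mathbb K^n$. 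Quantifying this equivalence over all partitions yields the complement property for $B\mathcal F$ if and only if for $\mathcal F$.

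For part (3), the only substantive input is that a family of real vectors $\{v_1,\dots,v_k\}\subset\mathbb R^n$ is linearly independent over $\mathbb R$ if and only if it is linearly independent over $\mathbb C$; equivalently, the rank of a real matrix is the same computed over $\mathbb R$ or over $\mathbb C$, since rank is governed by the largest nonvanishing minor and vanishing of a real determinant is field-independent. Therefore, for any $\mathcal F_\ell\subset\mathbb R^n$, one has $\mathrm{span}_{\mathbb R}\mathcal F_\ell=\mathbb R^n$ if and only if $\mathrm{span}_{\mathbb C}\mathcal F_\ell=\mathbb C^n$. Applying this to both blocks of every disjoint partition of $\mathcal F$ gives the equivalence of the complement property in $\mathbb R^n$ with that in $\mathbb C^n$.

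I expect no serious obstacle, as the lemma is essentially definitional; the only points requiring care are the appearance of the adjoint $B^\ast$ (rather than $B$ itself) in the factorization for part (1), phrasing the reduction so that it is valid over $\mathbb C$ where Theorem \ref{spanning} is unavailable, and invoking the field-independence of rank correctly in part (3).
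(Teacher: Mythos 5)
Your proof is correct and is precisely the direct definitional verification the paper has in mind: the paper offers no written proof at all, stating only that the lemma ``follows immediately from the definitions.'' Your factorization $T_{B\mathcal F}=T_{\mathcal F}\circ B^{\ast}$ with $B^{\ast}$ descending to a bijection of the quotient (which correctly avoids Theorem \ref{spanning} and so covers $\mathbb K=\CC$), the identity $\mathrm{span}(B\mathcal F_\ell)=B\,\mathrm{span}(\mathcal F_\ell)$ for part (2), and the field-independence of rank for part (3) are exactly the routine checks being alluded to.
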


%\marginnote{Ilya: I altered the proof of Proposition \ref{NecConLC}. I also moved and amended Lemma 
%\ref{invertiblecomp}.}   
\begin {proof}[Proof of Proposition \ref {NecConLC}] 
Assume that the set $\mathcal E$ in \eqref{sete} does phaseless reconstruction on $\R^n$. It follows from 
Theorem \ref{spanning} and Lemma \ref{invertiblecomp} that the set
\[
\{J^j\psi_i: i\in \mathscr I, j = 0,\ldots, 2r_i-2\}
\]
has complement property in $\CC^n$. Let us consider an arbitrary partition $\mathscr I_1,\mathscr I_2$ of $\mathscr I$. Then one of the sets $\mathscr J_k = \{J^j\psi_i: i\in \mathscr I_k, j = 0,\ldots, 2r_i-2\}$, $k=1,2$, spans $\CC^n$. Without loss of generality we may assume that the set $\mathscr J_1$ has this property.
From the definition of a $(J,\psi_i)$-annihilator, however, we have
that $\mathscr J_1 = \{J^j\psi_i: i\in \mathscr I_1, j = 0,\ldots, r_i-1\}$. Theorem \ref {tadcul} now implies that
every set $\{P_s \psi_i, i \in \mathscr I_1\}$ forms a frame for $E_s = P_s\CC^n$, $P_s\in\mathscr P_J$. Therefore, $\Psi$ is locally complementary with respect to $\mathscr P_J$.
%
%Assume that $\Psi$ is not  locally complementary  with respect to $\mathscr P_J$. Then, there exists a partition $\mathscr I_1,\mathscr I_2$ of $\mathscr I$, and $j_1, j_2\in J$, with $j_1\ne j_2$, such that if $\{P_{j_1}\psi_i: i \in \mathscr I_1\}$ is complete in $E_{j_1}$ then $\{P_{j_2}\psi_i: i \in \mathscr I_1\}$ is not complete in $E_{j_2}$, and if $\{P_{j_1}\psi_i: i \in \mathscr I_2\}$ is complete in $E_{j_1}$ then $\{P_{j_2}\psi_i: i \in \mathscr I_2\}$ is not complete in $E_{j_2}$.   Conisder the partition of \[
% \mathcal F = \{J^\ell\psi_i:   i\in\mathscr I, \ell= 0,\ldots, 2r_i-2\},
% \] into $\mathcal F_s$ with $s=1,2$ given by
% 
 %\[
% \mathcal F_s = \{J^\ell\psi_i:   i\in\mathscr I_s, \ell= 0,\ldots, 2r_i-2\}.
% \]
% Then, By Theorem \ref {tadcul} neither $\mathcal F_1$ nor $\mathcal F_2$ is complete in $\mathbb{C}^n$. Hence $\mathcal F$ does not satisfy the complementarity property in $\mathbb{C}^n$ and, therefore, the set of vectors $\{\phi_i, (A^T)(\phi_i),\cdots,(A^T)^{2r_i-2}\phi_i\}$  does not do phaseless reconstruction. 
\end {proof}

We are now ready to prove the complementarity property for the special case when $A=J$ is a Jordan matrix.

\begin{lemma}
\label {Jordan}
Let $J\in\CC^{n\times n}$ be a Jordan matrix. Let $\Psi = \{\psi_i: i\in\mathscr I\}$ be a set of vectors in $\CC^n$, and $r_i =r^J_{\psi_i}$. If  $\Psi$ is  locally complementary with respect to the penthouse partition $\mathscr P_J$, and if $J$ is  iteration regular then the set
 \[
 \mathcal F = \{J^\ell\psi_i:   i\in\mathscr I, \ell= 0,\ldots, 2r_i-2\}
 \]
 has the complement property in $\CC^n$.
\end{lemma}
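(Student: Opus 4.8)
The plan is to verify the complement property directly from its definition (see Definition~\ref{CompProp}): I would take an arbitrary disjoint partition $\mathcal F = \mathcal F_1 \cup \mathcal F_2$ and show that at least one of the two pieces spans $\CC^n$. The natural first step is to pull the partition back to the index level. Since every vector in $\mathcal F$ has the form $J^\ell \psi_i$, I would like to reduce to the case where the partition respects the grouping by $i$; that is, where each ``orbit'' $\{\psi_i, J\psi_i, \ldots, J^{2r_i-2}\psi_i\}$ is assigned entirely to one of $\mathcal F_1$ or $\mathcal F_2$. This reduction is where iteration regularity does its work, via Proposition~\ref{krylovspan}.

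Here is the key observation. For each fixed $i$, the maximal Krylov subspace $\mathcal K_\infty(J, \psi_i)$ has dimension exactly $r_i = r^J_{\psi_i}$, and the system $\{\psi_i, J\psi_i, \ldots, J^{2r_i-2}\psi_i\}$ lives inside it. By Proposition~\ref{krylovspan}, since $J$ is iteration regular, \emph{any} $r_i$ of these $2r_i-1$ vectors already form a basis of $\mathcal K_\infty(J,\psi_i)$. Now, given the partition $\mathcal F_1, \mathcal F_2$, for each $i$ at least one of the two pieces contains at least $r_i$ of the vectors from the $i$-th orbit (since the orbit has $2r_i-1 \ge r_i$ elements, and if one side had fewer than $r_i$ the other would have more than $r_i-1$, i.e.\ at least $r_i$). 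Hence for each $i$ there is a choice $\ell(i)\in\{1,2\}$ such that the portion of the $i$-th orbit lying in $\mathcal F_{\ell(i)}$ spans all of $\mathcal K_\infty(J,\psi_i)$, and in particular $\overline{\mathrm{span}}\,(\mathcal F_{\ell(i)} \cap \text{orbit}_i) \ni J^\ell \psi_i$ for every $\ell \ge 0$.

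With this in hand, I would define index sets $\mathscr I_1 = \{i : \ell(i) = 1\}$ and $\mathscr I_2 = \{i : \ell(i)=2\}$, giving a partition of $\mathscr I$. Because each $\mathcal F_\ell$ recaptures the full Krylov orbit of every $\psi_i$ with $\ell(i)=\ell$, it suffices to show that one of the ``saturated'' families $\{J^\ell \psi_i : i\in\mathscr I_\ell,\ \ell = 0,\ldots, r_i-1\}$ spans $\CC^n$. Now the local complementarity hypothesis applies: for the partition $\mathscr I_1,\mathscr I_2$ there is an $\ell\in\{1,2\}$ so that for every $s$, the set $\{P_s \psi_i : i\in\mathscr I_\ell\}$ spans $E_s = P_s\CC^n$. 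By Theorem~\ref{tadcul} (the equivalence of (ii) and (i), applied to the index subset $\mathscr I_\ell$), this forces $\{J^j\psi_i : i\in\mathscr I_\ell,\ j = 0,\ldots, r_i-1\}$ to be a frame for, hence to span, $\CC^n$. Since this spanning set is contained in the closed span of $\mathcal F_\ell$, we conclude $\overline{\mathrm{span}}\,\mathcal F_\ell = \CC^n$, which is exactly the complement property.

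The main obstacle I anticipate is the bookkeeping in the reduction step: I must be careful that the per-$i$ choice $\ell(i)$ of which side ``captures'' the orbit can genuinely be aggregated into a single global partition $\mathscr I_1,\mathscr I_2$ to which local complementarity applies, and that Proposition~\ref{krylovspan} is invoked with the correct count (one needs $2r_i - 2$ to be at least $\max\{1, 2r_i-2\}$, which is why the measurement window $\ell = 0,\ldots,2r_i-2$ is exactly the right length). The degenerate cases $r_i = 1$ and $r_i = 0$ (i.e.\ $\psi_i = 0$) should be checked separately but are routine.
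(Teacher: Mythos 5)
Your proposal is correct and takes essentially the same route as the paper's own proof: you partition $\mathscr I$ according to which side of the partition of $\mathcal F$ captures at least $r_i$ vectors of the $i$-th orbit (possible since each orbit has $2r_i-1$ elements), invoke Proposition~\ref{krylovspan} via iteration regularity to recover the full Krylov subspace from those $r_i$ vectors, and then combine local complementarity with Theorem~\ref{tadcul} to conclude that the favored side spans $\CC^n$. The only cosmetic difference is that the paper defines the index sets directly as $\mathscr I_\ell = \{i : \lvert\Gamma_i^\ell\rvert \ge r_i\}$ with $\Gamma_i^\ell = \{j : J^j\psi_i \in \mathcal F_\ell\}$, rather than through your choice function $\ell(i)$, and it glosses over the degenerate cases you flag.
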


%{\color{red} Akram's comments: locally complementary w.r.t and complement property can be confusing}

\begin{proof} %[Proof of Lemma \ref {Jordan}]
Let $\mathcal F_1,\mathcal F_2$ be a partition of the set $\mathcal F.$ 
For $\ell=1,2$, let $\Gamma_i^\ell=\{j: J^j\psi_i\in \mathcal F_\ell\}$ and
 $\I_{\ell} = \{i\in\I: |\Gamma_i^\ell| \ge r_i\}$. Clearly, $\{\I_1, \I_2\}$ forms a partition of $\I$. Therefore, due to local complementarity, we may assume that the set $\{P_j\psi_i, i\in\I_1\}$ spans $E_j$ for each $j = 1,\ldots, d$ {\color {green} (see Definition \ref {defJord})}. We will show that $\mathcal F_1$ spans $\CC^n$. 
 
 First, we claim that $$span \mathcal F_1%\{D^\ell\psi_i: \ell \in J^1_i\}
 =span \{J^\ell \psi_i:  i\in\I_1, \ell=0, 1\dots\} =span \{J^\ell\psi_i: i\in\I_1, \ell=0,\dots,r_i-1\}.$$ 
 The second equality follows since $r^J_{\psi_i} = r_i$. 
The first one follows from Proposition  \ref{krylovspan}, since $J$ is iteration regular and for each $i\in \I_1$, $|\Gamma_i^1|=|\{j: J^j\psi_i\in \mathcal F_1\}|\geq r_i$.
%To prove the first equality, let $\widetilde {\Gamma^1_i}\subseteq \Gamma^1_i$ be any set with exactly $r_i$ vectors and consider the linear combination   $\sum\limits_{l\in \widetilde{\Gamma^1_i}}c_\ell J^\ell\psi_i=\tilde p(J)\psi_i$. Assume $\tilde p(J)\psi_i = 0$. Then,  $\tilde p$  has at least $r_i$ zeros in common with the minimal polynomial of $J$ and at most $r_i$ nonzero coefficients. Since $J$ is iteration regular, the polynomial and hence all its coefficients $c = (c_\ell)_{\ell \in \widetilde{\Gamma^1_i}}$ must be zero. Thus, 
% $$span \mathcal F_1 = span\{J^\ell\psi_i, i\in \I_1, \ell = 0,1,\ldots\}.$$
 
Finally, since the set $\{P_j\psi_i, i\in\I_1\}$ spans $E_j$ for each $j = 1,\ldots, d$, it follows from Theorem \ref {tadcul} that $ span\{J^\ell\psi_i, i\in \I_1, \ell = 0,1,\ldots\}=span \mathcal F_1=\CC^n$. 
\end{proof}

\medskip

%Using Proposition \ref {iterregD} and  Lemma \ref {Jordan},   we  get the following result for a diagonal matrix.

%\begin{lemma}\label{mainthm1}
%Let $D\in \CC^{n\times n}$ be  a diagonal matrix with 
% $d$ distinct eigenvalues $\lambda_1$, \ldots, $\lambda_d$.
% Let also $\Lambda= (\lambda_j^\ell)$, 
% $j = 1,\ldots, d$, $\ell = 0,\ldots, 2d-2$, be a $d\times (2d-1)$ matrix comprised of these eigenvalues,
% $\Psi = \{\psi_i: i\in\mathscr I\}$ be a set of vectors in $\CC^n$, and $r_i =r^D_{\psi_i}$. If the  matrix  $\Lambda$ has totally full spark and the set $\Psi $  is {locally complementary} with respect to the penthouse family $\mathscr P_D$   then  
% \[
% \mathcal F = \{D^\ell\psi_i:   i\in\I, \ell \in 0,\ldots, 2r_i-2\}
% \]
% has complement property in $\CC^n$. 
%\end{lemma}

\begin {proof} [Proof of Theorem \ref  {GenRealMatJorD}] % and \ref {GenRealMat}]
To prove the theorem, we simply use Lemmas \ref{Jordan}, %, \ref{mainthm1}, 
 \ref{invertiblecomp}, and Theorem \ref{spanning}.
 %, and the fact that a set of real vectors has the complement property in $\mathbb C^n$ if and only if it has the complement property in $\mathbb R^n$.
\end {proof}

\begin{proof}[Proof of Corollary  \ref {posconv}]
Let $\{e_i :i=0,\cdots,n-1\}$ be the standard basis of $\mathbb{R}^n$ and ${F}_n= \Big(\frac {\omega^{jk}} {\sqrt{n}}\Big)$, $j,k=0,\dots,n-1$, $w=e^{\frac {-2\pi i}{n}}$, denote the $n \times n$ discrete Fourier matrix. By the convolution theorem, $A=F_n^* diag(\hat a) F_n$. By the symmetry and monotonicity  of $\hat a$, we have { $ diag(\hat a)=\sum_{k=1}^{\frac{n-1}{2}}(\hat a(k)P_k)+\hat a(0) P_0$}, where $P_k$ is the orthogonal projection from $\mathbb{C}^n$ onto $\{e_k, e_{n-k}\}$ for $k=1,\ldots, \frac{n-1}{2}$, and $P_0$ is  the orthogonal projection from $\mathbb{C}^n$ onto $\{e_0\}$. It is easy to see that the maximal dimension of  the range space of $P_k$ is 2; therefore, $\lvert \I\rvert \geq 3$ by Corollary \ref {loccompreal}. Let $b_{i}=\overline{{F_n}} e_i$.  We need to check that the conditions of  Theorem \ref{GenRealMat}, are satisfied.  Note that, since for any $i$ the vector $b_i$ is a column of $\overline {F_n}$ which has no zero entries, { $spark \{P_kb_i: i\in \I\}=2$ for $k=1,\ldots, \frac{n-1}{2}$, and $spark \{P_0b_i: i\in \I\}=1$  }. Observe that, for $k\ne n$,  $i_1,i_2$, $P_kb_{i_1}$ and $P_kb_{i_2}$ are linearly independent if and only if $\omega^{i_1-i_2}-\overline {\omega^{i_1-i_2}}=0$. This happens if and only if $(i_1-i_2)k\ne0\mod n$, $k=1,\dots, \frac {n-1} 2$, which is equivalent to the fact that $|i_1-i_2|$ and $n$ are coprime. The other set of indices is handled similarly. Thus, for all $k\ne 0$, we have $spark \{P_kb_i: i\in \I\}=3$ if and only if $ \lvert i_k-i_j \rvert$ and $n$ are co-prime for distinct $j,k=1,2,3$. Since the number of distinct eigenvalues is $\frac {n+1} 2$, and since $b_i$ has no zero entries, $r_i=\frac {n+1} 2$. Finally, using Theorem \ref {GenRealMat} {and Remark \ref {positiveEV}}, Corollary  \ref {posconv} is proved.
\end{proof}

%{\color{red}Sui's comments: I think we need to introduce the definition of $\hat a$. There is a notation problem here. Does our indices start from 0? \color{blue} Ilya: Since we are not using the cyclic group notation, we may have an indexing problem. this needs to be fixed.}

\begin {proof}[Proof of Theorem \ref {GenRealMatInf}]
We first show that the set  $ \mathcal F = \{D^\ell\psi_i:   i\in\I, \ell \in 0,\ldots, 2r_i-2\}$
 has the complement property in $\ell^2(\N)$. 
 As in the proof of Lemma \ref {Jordan}, we let  $\mathcal F_1,\mathcal F_2$ be a partition of the set $\mathcal F,$  and
let $\Gamma_i^\ell=\{j: D^j\psi_i\in \mathcal F_\ell\}$ and
 $\I_{\ell} = \{i\in\I: |\Gamma_i^\ell| \ge r_i\}$, $\ell=1,2$. Then $\I_1, \I_2$ is a partition of $\I$. By the local complementarity  of $\Psi$ with respect to $\P_D$, we get that there exists $\ell\in\{1,2\}$ such that 
for each {$j \in \Delta$} the set
$\{P_j\psi_i : i \in \I_\ell\} $
spans the range $E_j$ of $P_j$. Without loss of generality, assume that $\ell=1$.
From this condition, it follows from Theorem \ref {complete} that the set $\{D^\ell\psi_i:   i\in\I_1, \ell \in 0,\ldots, r_i-1\}$ is complete in $\ell^2(\N)$. Since by construction, $|\Gamma^1_i|\ge r_i$ for each $i \in \I_1$, consider a set $\widetilde {\Gamma}^1_i\subset \Gamma^1_i$ such that $|\widetilde {\Gamma}^1_i|=r_i$. We claim that, $span \{D^\ell\psi_i: i \in \I_1, \; \ell \in  \widetilde {\Gamma}^1_i\}=span\{D^\ell\psi_i: i \in \I_1, \; 0\le \ell \le r_i-1\}=span\{D^\ell\psi_i: i \in \I_1, \; 0\le \ell \}$. The last equality {follows from the definition of $r_i$}. Thus, all we need to finish proving the claim is to show  that the set $\{D^\ell\psi_i: i \in \I_1, \; \ell \in  \widetilde {\Gamma}^1_i\}$ is linearly independent. To see this,  we consider the linear combination $\sum\limits_{\ell \in \widetilde {\Gamma}^1_i} c_\ell D^\ell\psi_i=q(D)\psi_i=0$, where  $q$ is a  polynomial that has degree at most $2r_i-2$. Hence, the polynomial $q$ divides the $(D,\psi_i)$-annihilator polynomial $p^D_{\psi_i}$ (see Definition \ref {minpol}  (2)).  It follows that $q$ has $r_i$ roots in common with $p^D_{\psi_i}$. Thus, $q(\lambda_{i_k})=0$ for exactly $r_i$ distinct values $\lambda_{i_k}$, $k=1,\dots,r_i$. In particular the coefficients $c_\ell$ must satisfy the system of $r_i\times r_i$ equations $\Lambda_i c=0$, where $\Lambda _i=(\lambda^\ell_{i_k})$, and where $c=(c_\ell)$. But since $\Lambda_i$ is non-singular, we must have $c=0$. Since $\widetilde {\Gamma}^1_i\subset \Gamma^1_i$, we also get that  $span \{D^\ell\psi_i: i \in \I_1, \; \ell \in  \widetilde {\Gamma}^1_i\}=span\{D^\ell\psi_i: i \in \I_1, \; 0\le \ell \le r_i-1\}=span\{D^\ell\psi_i: i \in \I_1, \; 0\le \ell \}=span \{D^\ell\psi_i: i \in \I_1, \; \ell \in   {\Gamma}^1_i\}$.

The proof of the theorem then follows  from Lemma \ref{invertiblecomp}, Theorem \ref{spanning}, and the fact that a set of real vectors has the complement property in the complex Hilbert space $\mathbb \ell^2(\N)$ if and only if it has the complement property in the real Hilbert space $\mathbb \ell^2(\N)$.
\end {proof}

The proof of Theorem \ref {GenRealMatInfPos} uses the M\"untz-Sz\'asz Theorem (see \cite{Heil11}).

\begin {theorem}[M\"untz-Sz\'asz Theorem] \label {MS}
Let $0\le n_1\le n_2\le \dots$ be an increasing sequence of nonnegative integers. Then
\begin{enumerate}
\item $\{x^{n_k}\}$ is complete in $C[0,1]$ if and only if $n_1=0$ and $\sum\limits_{k=2}^\infty 1/n_k=\infty$.
\item If  $0<a<b< \infty$, then $\{x^{n_k}\}$ is complete in $C[a,b]$ if and only if  $\sum\limits_{k=2}^\infty 1/n_k=\infty$.
\end{enumerate}
\end {theorem}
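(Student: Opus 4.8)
The plan is to pass to the dual problem and settle it with complex analysis on a half-plane. By the Hahn--Banach and Riesz representation theorems, the system $\{x^{n_k}\}$ fails to be complete in $C[a,b]$ (resp.\ $C[0,1]$) precisely when there is a nonzero complex Borel measure $\mu$ on $[a,b]$ (resp.\ $[0,1]$) annihilating every $x^{n_k}$, i.e.\ $\int x^{n_k}\,d\mu(x)=0$ for all $k$. After the harmless rescaling $x\mapsto x/b$ (which multiplies each $x^{n_k}$ by a nonzero constant and hence preserves the closed span) we may assume $b=1$, so the interval lies in $(0,1]$. The central object is
\[
F(z)=\int x^{z}\,d\mu(x),
\]
which is holomorphic and bounded in the right half-plane $\mathrm{Re}\,z>0$, since $|x^{z}|=x^{\mathrm{Re}\,z}\le 1$ there; differentiation under the integral sign is justified by dominated convergence. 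The annihilation condition says exactly that $F$ vanishes at each $n_k$, so everything reduces to the interplay between the growth of the exponents $n_k$ and the distribution of zeros of a bounded holomorphic function on a half-plane, governed by the Blaschke condition.

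For the completeness direction, assume $\sum_{k\ge 2}1/n_k=\infty$ and suppose for contradiction that some nonzero $\mu$ annihilates all $x^{n_k}$. The zeros $\{n_k\}$ of the bounded holomorphic function $F$ would then have to obey the Blaschke condition $\sum_k \mathrm{Re}(n_k)/(1+|n_k|^2)<\infty$; but for nonnegative integers $n_k\to\infty$ this sum is comparable to $\sum 1/n_k$, which diverges (the $k=1$ term drops out precisely when $n_1=0$, explaining the index shift). Hence $F\equiv 0$, so $\int_{(0,1]} x^{m}\,d\mu=0$ for every integer $m\ge 1$, and letting $z\to 0^{+}$ gives $\int_{(0,1]}d\mu=0$ as well. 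Since polynomials are dense in $C[a,1]$ by the Weierstrass theorem, $\mu$ restricted to $(0,1]$ must vanish, so $\mu=c\,\delta_{0}$. In the $C[a,b]$ case with $a>0$ this already forces $\mu=0$; in the $C[0,1]$ case the hypothesis $n_1=0$ yields $0=\int x^{0}\,d\mu=c$, again $\mu=0$. This proves completeness, and the same computation shows why $n_1=0$ is \emph{necessary} in part~(1): if every $n_k\ge 1$ then $\mu=\delta_{0}$ is a nonzero annihilator, so the system is incomplete irrespective of the series.

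For the non-completeness direction, assume $\sum_{k\ge 2}1/n_k<\infty$. Now the Blaschke condition for the half-plane holds, so one can form a convergent Blaschke-type product $B(z)$ that is bounded and holomorphic on $\mathrm{Re}\,z>0$ and vanishes exactly on $\{n_k\}$. Fixing an exponent $\lambda\notin\{n_k\}$, one adjusts $B$ to a bounded holomorphic $F$ that still vanishes on $\{n_k\}$ but has $F(\lambda)\neq 0$, and then realizes $F$ as a Mellin transform $F(z)=\int x^{z}\,d\mu(x)$ of a nonzero measure $\mu$ on the interval. This $\mu$ annihilates every $x^{n_k}$ while $\int x^{\lambda}\,d\mu=F(\lambda)\neq 0$, so $x^{\lambda}$ lies outside the closed span and the system is not complete.

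I expect the last step---recovering a genuine finite measure $\mu$ from the analytic function $F$---to be the main obstacle, since it amounts to an inverse Mellin (equivalently, after $x=e^{-s}$, inverse Laplace) transform, and one must verify that the resulting functional is represented by an honest measure on the interval rather than a more singular object. The remaining complex-analytic input, namely the equivalence between $\sum 1/n_k<\infty$ and the existence of a nonzero bounded holomorphic function on the half-plane vanishing at $\{n_k\}$, is exactly the Blaschke theory already invoked above, used here in the constructive direction instead of the obstructive one.
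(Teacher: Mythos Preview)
The paper does not prove the M\"untz--Sz\'asz theorem; it is quoted as a classical result with a reference to \cite{Heil11} and used as a black box in the proof of Theorem~\ref{GenRealMatInfPos}. So there is no ``paper's proof'' to compare your attempt against.

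Judged on its own merits, your sketch follows the standard complex-analytic route. The completeness direction is essentially complete and correct: the Hahn--Banach/Riesz reduction, the holomorphy and boundedness of $F(z)=\int x^{z}\,d\mu(x)$ on the right half-plane, the Blaschke obstruction forcing $F\equiv 0$, and the Weierstrass argument to conclude $\mu|_{(0,1]}=0$ are all sound. The handling of the point mass at $0$ and of the necessity of $n_1=0$ in part~(1) is also correct.

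The non-completeness direction, however, has a genuine gap that you yourself flag but do not close. Producing a bounded holomorphic function on the half-plane with zeros exactly at $\{n_k\}$ is easy (Blaschke product), but showing that it---or some suitable modification of it---is the Mellin transform of an honest finite Borel measure on $[a,1]$ is the real content of the argument, and it does not follow from general Paley--Wiener considerations without further work (boundedness on the half-plane alone does not put $F$ in the right Hardy class for the inverse transform to yield an $L^{1}$ density, let alone a finite measure). The textbook fixes are either (i) to work in $L^{2}[0,1]$ instead, where the distance from $x^{\lambda}$ to $\mathrm{span}\{x^{n_1},\dots,x^{n_N}\}$ can be computed explicitly via Cauchy determinants and shown to stay bounded away from $0$ when $\sum 1/n_k<\infty$, and then transfer to $C[0,1]$; or (ii) to construct the annihilating function directly by a contour-integral/residue argument after the substitution $x=e^{-t}$. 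Either route needs a page or two of honest work; as written, your proposal stops just short of it.
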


\begin {proof}[Proof of Theorem \ref {GenRealMatInfPos}]
As in the proof of Theorem \ref {GenRealMatInf},  let  $\mathcal F_1,\mathcal F_2$ be a partition of the set $\mathcal F,$  and
let $\Gamma_i^\ell=\{j: D^j\psi_i\in \mathcal F_\ell\}$ and
 $\I_{\ell} = \{i\in\I: r_i<\infty, \text {  and }|\Gamma_i^\ell| \ge r_i, \}\cup \{i\in\I: r_i=\infty, \text {  and }\sum\limits_{j\in \Gamma_i^\ell\setminus \{0\}}^\infty 1/j=\infty\}$, $\ell=1,2$.  Then, $\I=\I_1\cup\I_2$, but $\I_1,\I_2$ is not necessarily a partition of $\I$. However, since  $\I=\I_1\cup\I_2$, we can use the local complementarity  of $\Psi$ with respect to $\P_D$, to get that there exists $\ell\in\{1,2\}$ such that, 
for each { $j \in \Delta$}, the set
$\{P_j\psi_i : i \in \I_\ell\} $
spans the range $E_j$ of $P_j$. Without loss of generality, assume that $\ell=1$. 
 
Any {finite square sub-matrix of  the semi-infinite matrix $\Lambda=(\lambda_j^l)$} is non-singular, since by assumption, all the eigenvalues $\lambda_k$ are strictly positive (see \cite {JP05}). Hence, for $i\in \I_1$ with  $r_i<\infty$, using the  same argument as in  Theorem \ref {GenRealMatInf}, we get $span \{D^\ell\psi_i: i \in \I_1, r_i< \infty, \; \ell \in   {\Gamma}^1_i\}=span\{D^\ell\psi_i: i \in \I_1, r_i< \infty, \; 0\le \ell \le r_i-1\}=span\{D^\ell\psi_i: i \in \I_1, r_i< \infty, \; 0\le \ell \}$.  When $i \in \I_1$ and $r_i=\infty$, using the M\"untz-Sz\'asz one can prove   that $\overline {span} \{i\in\I_1: r_i=\infty, \text {  and }\sum\limits_{j\in \Gamma_i^\ell\setminus \{0\}}^\infty 1/j=\infty\}=\overline{span}\{D^\ell\psi_i: i \in \I_1, r_i=\infty, \; 0\le \ell \}$ as  in \cite [Proof of Theorem 3.6]{ACMT14}. Finally, the proof of the theorem then follows  from Lemma \ref{invertiblecomp}, Theorem \ref{spanning}, and the fact that a set of real vectors has the complement property in the complex Hilbert space $\mathbb \ell^2(\N)$ if and only if it has the complement property in the real Hilbert space $\mathbb \ell^2(\N)$.
\end {proof}

\section{Numerical Experiments} \label {NE}
In this section, we propose an optimization approach and report on the outcomes of  numerical experiments for the phaseless reconstruction in a diffusion-like process where  the evolution operator $A$ is a real circulant matrix. Recall that for a vector $f=(f_i) \in \mathbb{R}^n$, $\|f\|_{p}=(\sum_{i}|f_i|^p)^{\frac{1}{p}}$ for $1 \leq p<\infty$ and $\|f\|_{\infty}=\max_i |f_i|$. Given a real vector $f \in \mathcal{H}$,  we seek to recover $f$ by solving the following  nonlinear minimization problem:
\begin{equation}\label{opt1}
 \min_{ g \in \mathcal{D}_r} \sum_{t=0}^{L}\sum_{i \in \I} (|A^t g(i)|^2-|A^tf(i)|^2)^2,
 \end{equation}
where $L$ is  the sampling time instance, $\I \subset \{0,1,\cdots,n-1\}$ denotes the set of sampling locations and  $\mathcal{D}_r$ is the search region defined by   

$$\mathcal{D}_r=\{ g \in \mathcal{H}: \|g\|_{\infty}\leq r \}.$$

We used Matlab implemented function \textit{fmincon}  to solve the above optimization problem and denoted by  $f_{rec}$ the output of $\textit{fmincon}$.  We   defined the relative recovery error  by 
\begin {equation}
\label {deferr} 
\textit{Err}=\frac{\min\{\|f-f_{rec}\|_2, \|f+f_{rec}\|_{2}\}}{\|f\|_2}.
\end {equation}

It is obvious that $f$ and $-f$ are minimizers of  \eqref{opt1} and the minimal value of the objective function is 0. Since our objective function is non-convex in general,  the $\textit{fmincon}$ solver can get {trapped}  in a local minimum and this can prevent the objective function from decreasing to the {minimum} value 0.  If we know, however, that the uniqueness conditions are satisfied, once the final value of the objective function decreases to a sufficiently small value, the output $f_{rec}$ {should be} close to the target function $f$ up to a sign.  Otherwise, if the uniqueness conditions are not satisfied,  it may happen that the final value of the objective function is very close to 0, but the output $f_{rec}$ is far away from both $f$ and $-f$. In the following, we present the outcomes of numerical experiments that demonstrate the importance of uniqueness. 
 
 \subsection{Importance of uniqueness} 
We let $\mathcal{H}=\mathbb{R}^9$ and $A \in \mathbb{R}^{9\times 9}$ be {a circular convolution operator  which satisfies the conditions of Corollary  \ref {posconv}}.  The initial signal $f \in \mathbb{R}^9$ was chosen at random with every entry  uniformly distributed in $[-4,4]$. We picked one realization and fixed it as the initial signal. We chose time instances $t=\{0,\ldots,8\}$ as required by Corollary  \ref {posconv}.  Let $ \I \subset \{0,1,\cdots,8\}$ denote the initial sampling locations.  

We set sampling locations  $\I_1=\{1,2,3\}$ and $\I_2=\{1,4,7\}$. It is easy to check that  $\I_1$ satisfies the conditions proposed in Corollary  \ref {posconv}, whereas $\I_2$ does not.  In our experiments, for a fixed  initial signal $f$, we set the searching radius to $r=4$. We  ran 100 experiments and  chose a random initialization in the searching region each time. If  the final value of the objective function  is below the threshold {that we set to be $10^{-8}$}, we recorded the corresponding $\textit{Err}$ {defined in \ref {deferr}}. We plotted the results of $\I_1$ in the figure $\textbf{(a)}$ and the results of $\I_2$ in the figure $\textbf{(b)}$.

 \begin{figure}[htbp]
\label{figure1}
 \centering
    \includegraphics[width=\textwidth]{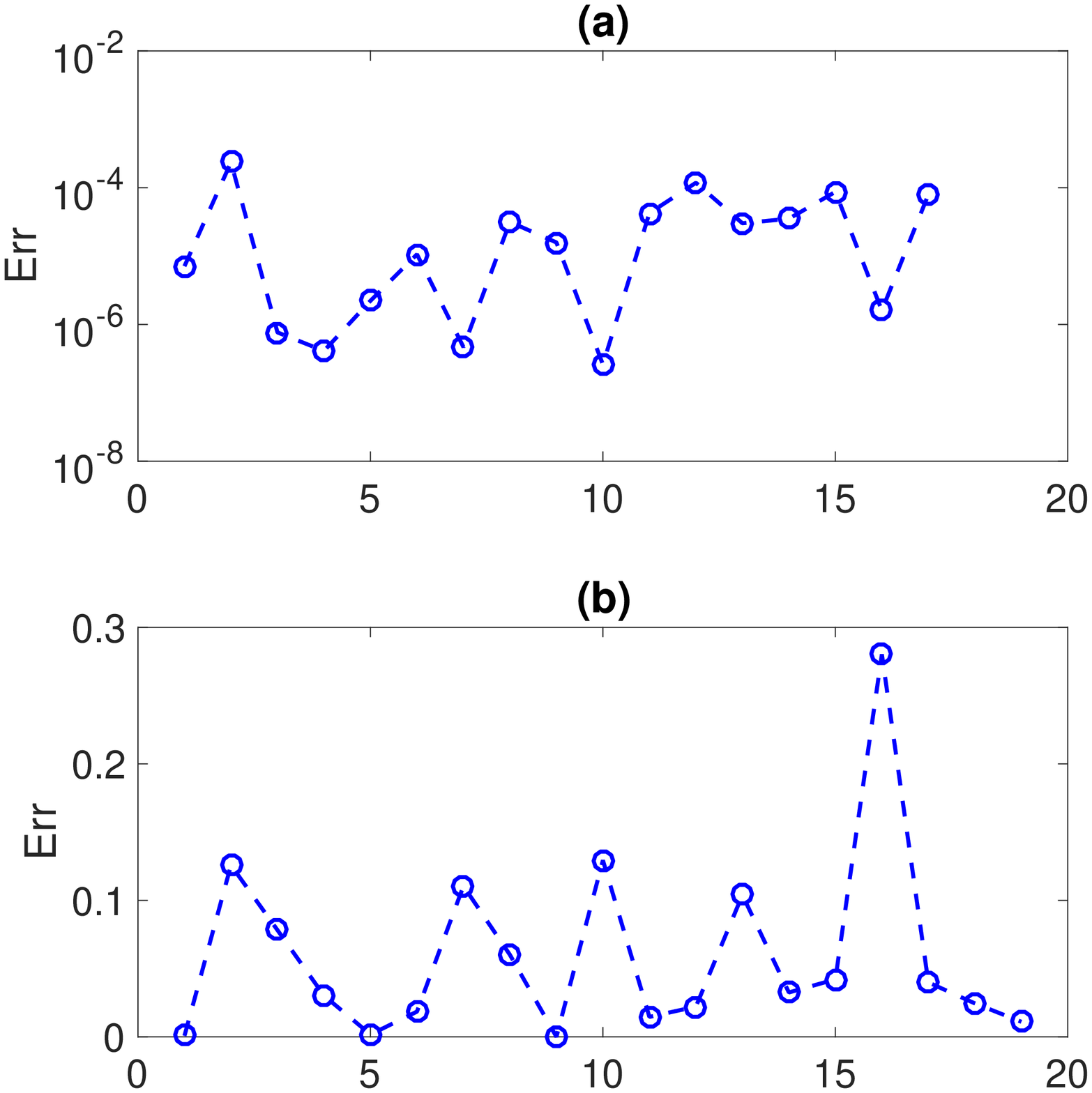}
\end{figure}

As we can see from  figure \textbf{(a)}, there are 17  out of 100 times that  final values of the objective function are below the threshold,  the relative errors were  all very small. Thus, as predicted by Corollary \ref{posconv}, we found the target function $f$ up to a sign.  In  figure $\textbf{(b)}$, there are 19  out of 100 times  that the  final values of the  objective function are below  the threshold. However, it happened in this case that most of the relative errors are large.  This is a consequence of the existence of at least one function $g\neq \pm f$ that has the same phaseless measurements. Notice also that the number of times  the algorithm converged below the threshold is larger in this case. This happened because there are more minimizers to converge to. In fact, in this case, the algorithm often converged to $\pm g$ instead to the desired functions $\pm f$. 

\subsection{A heuristic example}
In this subsection, we assume that $n$ is odd and consider an interesting case where 
\begin{equation}
\label{random}
A=F_n^*\Sigma F_n, \text{ and } \Sigma=\begin{bmatrix} \sigma_1 & 0 &\cdots&  \\ 0 & \sigma_2 &\cdots & \\ \vdots & & \ddots \\ &&& \sigma_n\end{bmatrix}
\end{equation} is a diagonal matrix whose entries are unit magnitude, complex numbers with random phases.  We generate the $\sigma_{\omega}$ as follows:
\begin{align*}
\omega=1&: \sigma_1 \sim \pm 1 \text{ with equal probability,}\\
2\leq \omega  \leq (n+1)/2&:  \sigma_{\omega}=e^{2\pi i \theta_{\omega}}, \text{ where } \theta_{\omega} \sim  \text{Uniform} [0,1]. \\
 (n+1)/2+1 \leq \omega \leq n&: \sigma_{\omega}=\sigma_{n-\omega}^*, \text{ the conjugate of } \sigma_{n-\omega}.  
\end{align*}

It is not difficult to prove the following proposition. 

\begin{proposition}\label{RandCirMat}
If $A$ is a real circulant matrix generated as in \eqref{random}, then $A$ is iteration regular with probability 1. 
\end{proposition}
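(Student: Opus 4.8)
The plan is to reduce iteration regularity, which by the discussion after Definition \ref{IterReg} is a purely spectral property, to a statement about the eigenvalues of $A$, and then to show that the relevant obstructions to iteration regularity occur only on a measure-zero subset of the probability space. Since $A = F_n^*\Sigma F_n$, the eigenvalues of $A$ are exactly the diagonal entries $\sigma_1,\ldots,\sigma_n$ of $\Sigma$, which are unit-magnitude complex numbers. First I would record that $A$ is automatically invertible (all $\sigma_\omega$ have modulus $1$, hence are nonzero), so by Example \ref{expiterreg}~(2) we never need to worry about $1$-partial annihilators; it suffices to examine $k$-partial annihilators for $k\ge 2$.

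Next I would identify the structure of the spectrum. By construction the phases $\theta_\omega$ for $2\le \omega\le (n+1)/2$ are independent continuous random variables, $\sigma_1\in\{+1,-1\}$, and the remaining $\sigma_\omega$ are the complex conjugates of these, so $A$ is real and its eigenvalues come in conjugate pairs off the real axis together with the one real eigenvalue $\sigma_1=\pm1$. The key observation is that iteration regularity can fail only through an algebraic relation among the eigenvalues: a $k$-partial annihilator of degree at most $2k-2$ with at most $k$ nonzero coefficients forces its $k$ roots (a subset of the $\sigma_\omega$) to satisfy a nontrivial polynomial relation, namely the vanishing of a suitable minor of the Vandermonde-type matrix $(\lambda_j^\ell)$ built from those eigenvalues. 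Each such minor is a fixed nonzero polynomial in the $\sigma_\omega$, hence (after expressing the conjugate entries and $\sigma_1$ in terms of the free phases $\theta_\omega$) a nontrivial real-analytic function of the continuous variables $(\theta_2,\ldots,\theta_{(n+1)/2})$. The zero set of a nontrivial real-analytic function has Lebesgue measure zero, so each bad event has probability zero.

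Then I would assemble the argument: there are only finitely many choices of $k$, finitely many ways to select which eigenvalues serve as the common roots, and finitely many minors to consider, so the total collection of ``bad'' configurations is a finite union of measure-zero events and thus still has probability zero. Taking complements, with probability $1$ no $k$-partial annihilator of degree at most $\max\{1,2k-2\}$ has fewer than $k+1$ nonzero coefficients, which is exactly the statement that $A$ is iteration regular.

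The main obstacle I anticipate is the bookkeeping needed to show that each relevant minor is genuinely a \emph{nonzero} polynomial in the free phases, rather than one that vanishes identically because of the forced conjugate-pairing $\sigma_{n-\omega}=\sigma_\omega^*$ and the real constraint $\sigma_1=\pm1$. The cleanest way to dispose of this is to exhibit a single explicit choice of phases for which the minor is nonzero (so that the polynomial is not the zero polynomial) and then invoke the measure-zero principle; one can lean on the totally-positive/full-spark facts already cited (\cite{JP05}, \cite{P10} and Proposition \ref{iterregD}) to furnish such witnessing configurations, since those results guarantee nondegenerate Vandermonde minors for distinct positive or distinct real base points.
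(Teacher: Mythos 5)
Your overall strategy is sound and is surely the intended one (the paper states Proposition \ref{RandCirMat} without proof, remarking only that it ``is not difficult,'' so there is no written argument to compare against): almost surely the $n$ eigenvalues $\sigma_1, e^{\pm 2\pi i\theta_\omega}$ of $A=F_n^*\Sigma F_n$ are distinct; invertibility ($\lvert\sigma_\omega\rvert=1$) disposes of $k=1$; a $k$-partial annihilator of degree at most $2k-2$ with at most $k$ nonzero coefficients is equivalent to the singularity of one of finitely many $k\times k$ submatrices of $(\lambda_j^\ell)$, $\ell=0,\dots,2k-2$, with nodes among the eigenvalues (exactly the mechanism in the proof of Proposition \ref{iterregD}); each such minor is a trigonometric polynomial in the free phases; and a finite union of null events is null.

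There is, however, a genuine gap precisely at the step you yourself flag as the main obstacle: producing, for each minor, a witnessing configuration at which it is nonzero. Your proposed fix via \cite{JP05} and \cite{P10} does not work, because the witness must lie in the constrained configuration space of the model --- all eigenvalues on the unit circle, in conjugate pairs sharing a single phase variable, plus the fixed node $\sigma_1=\pm1$. No choice of phases yields two distinct \emph{positive} nodes, so the totally positive theory furnishes no admissible evaluation point; and your fallback that these results cover ``distinct real base points'' is false --- the paper's own Remark \ref{necnotsuf} exhibits the spectrum $\{1,-1\}$ for which $\Lambda$ fails totally full spark (nodes $\{1,-1\}$ with exponents $\{0,2\}$ give two identical rows $(1,1)$), and this mixed-sign situation is exactly what you face whenever $\sigma_1=-1$. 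The step can be repaired, but by a different device: each minor is a Laurent polynomial in the variables $z_\omega=e^{2\pi i\theta_\omega}$ (a conjugate pair contributes rows with entries $z_\omega^{\ell}$ and $z_\omega^{-\ell}$), and a Laurent polynomial vanishing on the whole torus has all Fourier coefficients zero, hence vanishes identically; nonvanishing then follows by induction on the number of active variables, since expanding the determinant in $z_\omega$ the coefficient of the extreme power ($z_\omega^{\max S-\min S}$ if both members of the pair are nodes, $z_\omega^{\max S}$ if only one is) is, up to sign, a complementary minor of the same constrained type in fewer variables, terminating at the nonzero $1\times1$ minor $(\pm1)^\ell$. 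With that substitute for your witness step (and folding the null event of eigenvalue collisions into the same union), your plan goes through.
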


For our experiments, we let $f \in \mathbb{R}^n$ be generated with every entry  drawn from the distribution Uniform[-0.5,0.5] independently and then fix it as the initial signal. We let $A$ be a realization of the random model \eqref{random}  and fix it as our evolution operator. Suppose we have noisy measurements $\{y(t,i): t=0,\cdots,2n-2, i \in \I \}$, where 
$y(t,i)=|A^tf(i)|^2+e(t,i),$
and the Gaussian noise $e(t,i) \sim \mathcal{N}(0,\sigma^2)$.   We would like to recover $f$ by solving the following minimization problem with noisy measurements

\begin{equation}\label{opt2}
 \min_{ g \in \mathcal{D}_{r}} \sum_{t=0}^{2n-2}\sum_{i \in \I} (|A^t g(i)|^2-y(t,i))^2. 
 \end{equation}
 
By Proposition $\ref{RandCirMat}$ and Theorem \ref{GenRealMatJorD}, we know that, any choice of nonempty  $\I$
guarantees the uniqueness almost surely. We will run  independent numerical experiments using the fmincon solver for different choices of $\I$.  In the noise free scenario, if the final value of the objective function \eqref{opt2} of a numerical experiment decreases to a number below a threshold, then we say that this numerical experiment is successful.  In the presence of noise, we define the threshold 
\begin{equation}\label{threshold}
v=\sum_{t=0}^{2n-2}\sum_{i \in \I} (|A^t f(i)|^2-y(t,i))^2. 
\end{equation}  If the final value of the objective function \eqref{opt2} of a numerical experiment decreases to a number below $v$, then this numerical experiment is said to be successful.  For a specific set $\I$,  we define the recovery probability $P_{\I}$ by 
$$P_{\I}=\frac{\#\text{ Successful experiments}}{\#\text{ Total experiments}}.$$

Corollary \ref{loccompreal} tells us that  the minimal number of measurements needed for real phaseless reconstruction in $\mathbb{R}^n$ is $2n-1$. We first consider the extreme case when $|\I|=1$. Then we only have $2n-1$ measurements, which is exactly the minimal requirement.  In the noise free scenario, the uniqueness conditions guarantee that $f_{rec}$ will be close to $f$ if the objective function value can decrease to a number very close to 0. Figure \ref{Figure2} displays the performance of the optimization approach for a specific example in the noise free scenario.  In this example, we observed that, for the successful numerical experiments, the objective function value decayed with iterations at a geometric rate.  The Err function decayed very slowly in the middle of iteration steps and then decayed geometrically with iterations to a number small than $10^{-1}$.  This may indicate that the objective function \eqref{opt2} is locally convex in a small neighborhood of the global minimizer.  

Next, we consider the scenarios with the presence of noise. In this case, the uniqueness is not enough.  In \cite{BW13}, it has been shown that the robust and stable phaseless reconstruction requires additional redundancy of measurements than the critical threshold, where the redundancy of measurements  is the ratio between the number of measurements  and the dimension of the signal.  In our setting, the redundancy of measurements is linearly proportional to the cardinality of $\I$.  Hence we expected that the extreme case (redundancy $\approx$ 2) would have poor robustness to noise. Figure \ref{Figure3} displays the performance of optimization approach for the example used in  Figure \ref{Figure2} with the presence of noise $(\sigma=0.01)$ and verifies our expectation.  We can see that even if the objective function value decayed  with iterations to be a number below the threshold $v$ defined in \eqref{threshold}, the Err function may not decay with the iterations and its final value is significantly large, which means that $f_{rec}$ achieved is not close to the target signal $f$.  

To obtain the numerical stability, we chose the sampling locations to be  $\I_1=\{1,2\}$ and $\I_2=\{1, 2, 3\}$ and 
$\I_i=\{1, \cdots,4i-8 \}$ for $i=3,\cdots, 6$. In Figure \ref{Figure4}, we plot  $P_{\I_i}$ and the average recovery error for $\I_i$. The result is quite striking : for a fixed problem instance, if we have sufficient  number of sampling locations ($|\I_i| \geq 3$), then the fmincon solver seems to always return a solution close to global minimizer (i.e., the target $f$ up to a sign) across many independent random initializations! This contrasts with the typical intuition of nonconvex objectives as possessing many spurious local minimizers. It would be very interesting to analyze the landscape of the objective function \eqref{opt2} similarly to the analysis in \cite{BE16, SQW16}  We leave the numerical study of this optimization approach for a future work.

\begin{figure}[htbp]
\centering
 \includegraphics[width=\textwidth]{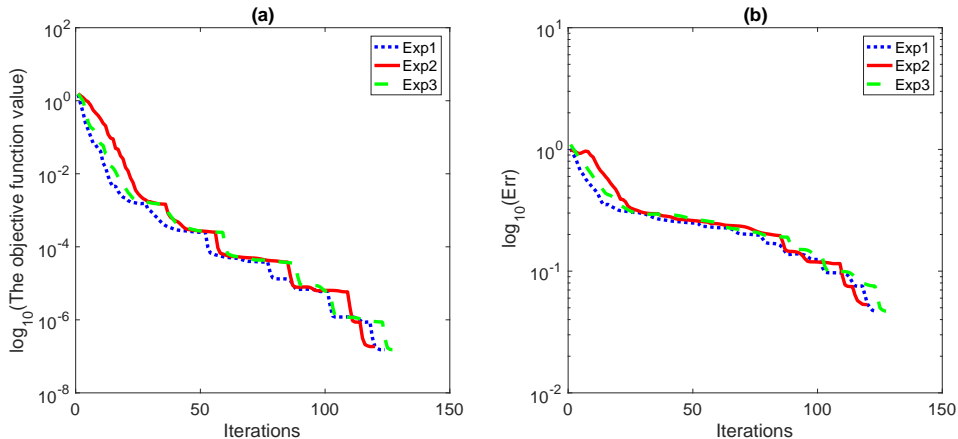}
 \caption{Let $\I=\{2\}$. We set $n=45$, $r=0.5$ and the noise level $\sigma=0$. We chose the threshold to be $10^{-6}$ and  ran independent numerical experiments  until we have 3 successful numerical experiments. We chose a random initialization in the searching region for each independent numerical experiment. We exhibit how the value of objective function \eqref{opt2} in $\log_{10}$ scale decayed with  iterations in \textbf{(a)}  and how Err in $\log_{10}$ scale decayed with  iterations in \textbf{(b)} for three successful numerical experiments.}
\label{Figure2}
 \end{figure}
 
 \begin{figure}[htbp]
\centering
 \includegraphics[width=\textwidth]{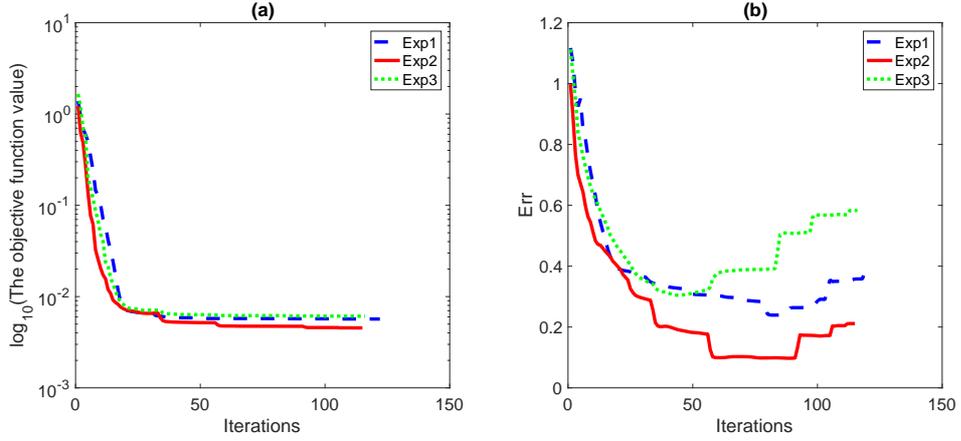}
 \caption{Let $\I=\{2\}$. We set $n=45$, $r=0.5$ and the noise level $\sigma=0.01$. We ran independent numerical experiments  until we have 3 successful numerical experiments. We exhibit how the value of objective function \eqref{opt2} in $\log_{10}$ scale decayed with  iterations in \textbf{(a)}  and how Err function in $\log_{10}$ scale behaved with  iterations in \textbf{(b)} for three successful numerical experiments. As we can see that, Err can increase even if the objective function value decreased. }
\label{Figure3}
 \end{figure}

 \begin{figure}[htb]
\centering
 \includegraphics[width=\textwidth]{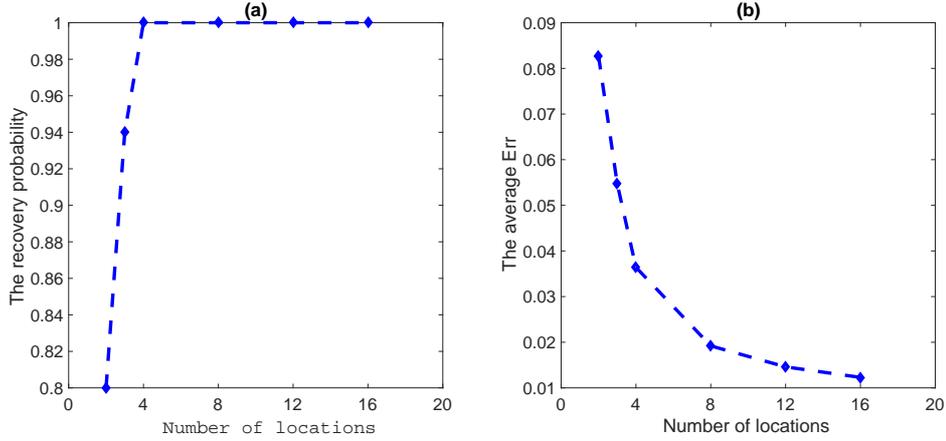}
 \caption{We set $n=45$, $r=0.5$ and the noise level $\sigma=0.01$. We  chose the sampling location $\I_1=\{1,2\}, \I_2=\{1,2,3\}$ and $\I_i=\{1,\cdots, 4i-8\}$ for $i=3,\cdots,6$. For each choice of $\I_i$, we  ran 100 independent numerical experiments and choose a random initialization in the searching region for each independent  experiment. We calculated  $P_{\I_i}$ and summarized them  in $\textbf{(a)}$ and plotted the average recovery error in $\textbf{(b)}$.  As we can see that, increasing the number of locations help increase the accuracy of the numerical solutions. Surprisingly, if we have enough number of sampling locations, any random initialization in the searching region can always converge to a solution close to a global minimizer!}
 \label{Figure4}
\end{figure}

\section{Acknowledgements}
The authors were supported in part by the collaborative NSF ATD grant DMS-1322099 and DMS-1322127. They would like to thank Rozy the cat for letting them, in a first,  do the research and write this manuscript without his supervision.

\clearpage
\bibliography{refers}{}
\bibliographystyle{siam}

\end{document}